\documentclass[11pt,letterpaper]{amsart}

\usepackage[T1]{fontenc}
\usepackage[utf8]{inputenc}
\usepackage{amsmath,amssymb,amsthm,mathtools}
\usepackage{mathrsfs}
\usepackage{enumitem}
\usepackage{hyperref}
\usepackage{xcolor}

\makeatletter
\@namedef{subjclassname@2020}{\textup{2020} Mathematics Subject Classification}
\makeatother

\mathtoolsset{showonlyrefs=true}

\newtheorem{theorem}{Theorem}[section]

\newtheorem{proposition}[theorem]{Proposition}
\newtheorem{lemma}[theorem]{Lemma}

\theoremstyle{definition}

\theoremstyle{remark}
\newtheorem{remark}[theorem]{Remark}

\numberwithin{equation}{section}

\newcommand{\R}{\mathbb R}

\newcommand{\tr}{\operatorname{tr}}

\DeclareMathOperator{\diam}{diam}
\newcommand{\avg}[1]{\left\langle #1\right\rangle}
\newcommand{\norm}[1]{\left\|#1\right\|}

\begin{document}

\title{Sharp $L^q$ estimate of adjoint problem and Pucci conjecture}

\author[Hung]{Jui-Yun Hung}
\address{Mathematical Sciences Institute\\
The Australian National University\\
Canberra, ACT 2600, Australia} 
\email{JUI-YUN.HUNG@anu.edu.au}
\author[Wu]{Nan Wu}
\address{School of Mathematics \& IMS\\
Nanjing University\\
Nanjing, 210093, China} 
\email{nwu2@nju.edu.cn}

\begin{abstract}
Let $L$ be a linear second-order elliptic operator satisfying a nondegeneracy condition. We establish sharp $L^q$ estimates for solutions of the adjoint problem associated with $L$ and, by duality, derive sharp $L^p$ estimates for solutions of $Lu = f$. In particular, this confirms a conjecture of Carlo Pucci \cite{Pucci1966}.
\end{abstract}

\date{\today}

\maketitle

\section{Introduction}

Let $L$ be a linear elliptic second-order partial differential operator
of the form
\begin{equation}\label{eq:operator L}
    Lu=\mathcal A:D^2u=a^{ij}D_{ij}u
\end{equation}
in a bounded domain $\Omega\subseteq\R^n$, where
$\mathcal A=(a^{ij})$ is a measurable map from $\Omega$ into the space
of positive definite symmetric $n\times n$ matrices.
Throughout this paper, we write
\[
 A:B=\mathrm{Tr}(AB)
\]
for symmetric $n\times n$ matrices $A$ and $B$.

The classical Aleksandrov--Bakel'man maximum principle
\cite{Aleksandrov1961,Bakelman1961,GilbargTrudinger2001}
shows that, for any
$u\in C^2(\Omega)\cap C^0(\overline{\Omega})$ with
$u\le0$ on $\partial\Omega$, one has
\begin{equation}\label{ABP}
    \sup_{\Omega}u
    \le
    C(n,\Omega)
    \left\|
    \frac{(Lu)^-}{\det(\mathcal A)^{1/n}}
    \right\|_{L^n(\Omega)}.
\end{equation}

In \cite{Pucci1966}, Carlo Pucci made the following conjecture.
Assume that $L$ is uniformly elliptic in $\Omega$, normalized so that
\begin{equation}\label{eq:noramlization}
    \tr\mathcal A=1
    \qquad\text{and}\qquad
    \mathcal A\ge\delta I,
\end{equation}
for some positive constant $\delta\le1/n$.
Then, for any
$u\in C^2(\Omega)\cap C^0(\overline{\Omega})$ with
$u=0$ on $\partial\Omega$, one has
\begin{equation}\label{eq:Pucci main conj}
    \sup_{\Omega}|u|
    \le
    C\|Lu\|_{L^p(\Omega)},
\end{equation}
for some constant $C$ depending on $n$, $\delta$, $p$, and $\Omega$,
and for every exponent $p$ satisfying
\begin{equation}\label{eq:exponent restriction}
    p>p_*=\max\{n/2,n[1-(n-1)\delta]\}.
\end{equation}
By \cite{Pucci1966}, the exponent in
\eqref{eq:exponent restriction} is sharp.
The conjecture also holds when $\delta=1/n$, in which case
$L$ is the Laplacian.

The Pucci conjecture was proved in dimension two in
\cite{Astala2006}, for integer exponents in higher dimensions in
\cite{KuoTrudinger2007}, and for a broader range of exponents,
together with further refinements, in \cite{Trudinger2020}.

In this paper, we confirm the Pucci conjecture for the full range of
exponents in the following form.

\begin{theorem}\label{main}
Let $\Omega\subset\R^n$ be a bounded domain, and assume that
$L$ satisfies \eqref{eq:noramlization}.
If $p>p_*$ and
$u\in W^{2,p}_{\mathrm{loc}}(\Omega)\cap C(\overline\Omega)$,
then
\begin{equation}\label{eq: main esti}
 \sup_\Omega u
 \le
 \sup_{\partial\Omega}u
 +
 C(\diam\Omega)^{2-n/p}
 \norm{(Lu)^-}_{L^p(\Omega)},
\end{equation}
where $C$ is a constant depending only on $n$, $\delta$, and $p$.
In particular, the Pucci conjecture holds.
\end{theorem}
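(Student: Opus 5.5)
The plan is to argue by duality with the adjoint problem, as the abstract suggests. After scaling we may assume $\diam\Omega=1$. By approximation we may also assume that $\mathcal A$ is smooth and that $u\in W^{2,p}$ up to a slightly shrunken domain. Since the constant may not depend on the smoothness of $\mathcal A$, every estimate must use only \eqref{eq:noramlization}.

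\textbf{Step 1: reduction to a Green function bound.} Assume $\sup_{\partial\Omega}u\le 0$ and, by replacing $u$ with the solution having right side $-(Lu)^-$ (comparison principle), that $Lu=-f\le0$. For $y\in\Omega$ let $G(\cdot,y)\ge0$ solve the adjoint problem $L^*G=D_{ij}(a^{ij}G)=\delta_y$ with zero boundary data. Green's representation then gives
\[
u(y)\le\int_\Omega G(x,y)f(x)\,dx\le \|G(\cdot,y)\|_{L^q(\Omega)}\|f\|_{L^p(\Omega)},\qquad q=p'.
\]
So it suffices to prove the sharp adjoint estimate
\[
\sup_y\|G(\cdot,y)\|_{L^q}\le C(n,\delta,q)\quad\text{for every } q<q_*=p_*'.
\]
Note that $q_*=n/(n-2)$ when $\delta\ge1/(2(n-1))$. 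Otherwise $q_*=\frac{n(1-(n-1)\delta)}{n(1-(n-1)\delta)-1}$, which is exactly the integrability of Pucci's extremal radial fundamental solution $|x|^{-\alpha}$.

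\textbf{Step 2: the $L^q$ bound for the adjoint solution.} Here I would work with positive adjoint solutions $v$ and test $L^*v=\mu\ge0$ against convex or radial functions. Take $\phi=\psi(|x-y|)$ with $\psi$ decreasing and convex, so that $D^2\phi$ has eigenvalue $\psi''$ radially and $\psi'/r$ tangentially. Under $\tr\mathcal A=1$ and $\mathcal A\ge\delta I$, the worst case of $\mathcal A:D^2\phi$ is governed by Pucci's extremal operator. This gives the sign condition $\mathcal A:D^2\phi\ge c\,r^{-\beta}$ for power functions $\psi=r^{-\gamma}$ precisely when $\gamma+2$ stays below the Pucci threshold. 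Integrating $v$ against such $\phi$ would give weighted bounds $\int v\,|x-y|^{-\beta}\le C$ for all $\beta$ below a critical value.

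These weighted bounds alone do not yield $L^q$, because $v$ could concentrate off-center. I would combine them with a reverse-Hölder/Gehring-type self-improvement. The Fabes–Stroock argument shows that adjoint solutions satisfy a reverse Hölder inequality with exponent $n/(n-1)$. The aim is to push this to every $q<q_*$ by iterating over dyadic annuli: one applies the local weighted estimate at each center $z$, not just $y$, to the measure $v\,dx$. This would show $v(B_r(z))\le C r^{n/q'}$ uniformly, that is, a Morrey-type bound, and an interpolation/layer-cake argument then upgrades it to $L^{q}$ for any smaller $q$.

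\textbf{Main obstacle.} The hardest step is Step 2 in the genuinely non-radial setting. The measure bound at centers $z\ne y$ must hold with constants independent of $\mathcal A$, and it must reach the endpoint exponent arbitrarily closely, rather than the weaker exponent given by ABP or Escauriaza's $n-\varepsilon$. I would first try to prove $\int_{B_r(z)}v\le Cr^{n-2+\varepsilon}$ for all $r$ and $z$, with the $n-2$ replaced by $n/p_*\cdot$(appropriate) in the small-$\delta$ regime. The method is a barrier comparison with the extremal Pucci subsolution centered at $z$, plus the doubling property of adjoint solutions. Once Step 2 is in hand, Step 1 together with scaling gives \eqref{eq: main esti}. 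Pucci's conjecture follows by applying the estimate to $u$ and $-u$.
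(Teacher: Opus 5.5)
Your Step 1 is a valid duality reduction. The paper also reduces to an $L^q$ bound for an adjoint solution, but it uses the pole-free solution $m_{r,x_0}$ (normalized by $m\,a^{ij}\nu_i\nu_j=1$ on $\partial B_r$), combined with a Harnack/barrier/dyadic argument, rather than the Green function.

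The gap is in Step 2, and it is not where you put it. The uniform Morrey bound is actually elementary. Normalize $G\ge0$ so that $u(y)=\int_\Omega G(x,y)f(x)\,dx$ solves $Lu=-f$ with $u=0$ on $\partial\Omega$. Then $\int_\Omega G(x,y)L\phi(x)\,dx\le\sup_{\partial\Omega}\phi-\phi(y)$ for smooth $\phi$. For $\phi_\varepsilon=(|x-z|^2+\varepsilon^2)^{\alpha/2}$ one has $L\phi_\varepsilon\ge\alpha[1-(2-\alpha)a](|x-z|^2+\varepsilon^2)^{\alpha/2-1}$, where $a=1-(n-1)\delta$ is the largest possible eigenvalue of $\mathcal A$. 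Fatou then gives $\int_{B_r(z)}G(x,y)\,dx\le Cr^{\lambda}(\diam\Omega)^{2-\lambda}$ for every $\lambda<\min\{2,1/a\}=n/p_*$, uniformly in $y$ and $z$. (Your decreasing convex profiles $r^{-\gamma}$ satisfy $L\phi\ge0$ only if $\gamma+2\ge1/\delta>n$, and then $\int G(x,y)|x-z|^{-\gamma-2}\,dx=\infty$.)

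What fails is the next sentence. A growth bound $v(B_r(z))\le Cr^{\lambda}$ with $\lambda<n$ yields no integrability beyond $L^1$. For example, $\varepsilon^{-1}\mathbf 1_{\{|x_n|<\varepsilon\}}$ satisfies $\int_{B_r(z)}\le Cr^{n-1}$ uniformly in $\varepsilon$, yet its $L^s(B_1)$ norm blows up for every $s>1$. By duality the $L^p\to L^\infty$ estimate is equivalent to $\sup_y\|G(\cdot,y)\|_{L^q}<\infty$, so no Morrey-type bound can finish the proof.

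Adding the Fabes--Stroock reverse H\"older inequality does not close the gap either. Gehring's lemma raises the exponent $n/(n-1)$ only by an $\varepsilon$ depending on the constants, and repeating it gives no control on how far the exponent can be pushed. Indeed, the $A_1$ weight $|x_n|^{-\gamma}$ with $0<\gamma<1$ satisfies a reverse H\"older inequality and $w(B_r(z))\le Cr^{n-\gamma}$. Yet it lies in $L^s_{\mathrm{loc}}$ only for $s<1/\gamma$, far below the exponent $n/\gamma$ suggested by your radial model.

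What is missing is a higher-integrability mechanism that detects the sharp threshold. In the paper it works as follows, on $B_1$ after scaling, with $\langle m\rangle=n$:
\begin{itemize}
\item Test the adjoint equation for $m$ against the convex solution of $\det D^2\psi=\det(m\mathcal A+tI)^{1/(n-1)}$ with $\psi=0$ on $\partial B_1$.
\item The identity $\int_{B_1}\det D^2\psi=\frac1n\int_{\partial B_1}\psi_\nu^n$ then gives $\langle\det(m\mathcal A+tI)^{1/(n-1)}\rangle\le(1+t)^{n/(n-1)}$ for all $t>0$.
\item Concavity on the eigenvalue simplex reduces this to $\langle(t+am)^{1/(n-1)}(t+\delta m)\rangle\le(1+t)^{n/(n-1)}$.
\item This becomes an Euler-type differential inequality in $t$. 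After subtracting the large-$t$ asymptotics fixed by $\langle m\rangle=n$ and integrating against $t^{q-1-n/(n-1)}$, it bounds $\langle m^q\rangle$ by a constant depending only on $n$, $\delta$ and $q$.
\item A $2$-Hessian/duality variant handles $n=3$, $q\ge2$.
\end{itemize}
Nothing in your proposal plays this role.
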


The proof of Theorem~\ref{main} is motivated by the
Fabes--Stroock approach \cite{FabesStroock1984} to higher
integrability of Green's functions.
In \cite{FabesStroock1984}, the authors studied nonnegative solutions
of adjoint equations $L^*v=0$ and applied the resulting estimates to
the difference $G_\Omega-G_{2B}$, which satisfies a homogeneous
adjoint equation in a suitable subdomain, in order to obtain higher
integrability estimates for the Green's function $G_\Omega$.
Here, $G_\Omega$ denotes the Green's function associated with $L$
on $\Omega$, and $G_{2B}$ denotes the Green's function associated
with $L$ on $2B\subset\Omega$.
See \cite{Fok1998,JungWoo2025} for further details and generalizations.

To treat the Pucci conjecture, we consider homogeneous adjoint
problems for $L$ on balls with suitable boundary conditions.
By establishing an $L^q$-estimate for solutions of these adjoint
problems, we obtain an $L^p$-estimate for solutions of the original
equation with zero boundary data on balls, where $p$ and $q$ are
H\"older conjugates.
A comparison argument then yields the desired estimate
\eqref{eq: main esti}.

\subsection*{Acknowledgments}

The authors would like to thank Neil Trudinger for bringing this
problem to their attention and for many helpful discussions and
encouragement. They also thank Qing Han for valuable comments and
suggestions. 
During the preparation of this manuscript, 
we became aware of two independent works. 
Chen, Trudinger, and Wang \cite{ChenTrudingerWang2026} studied 
the Pucci conjecture using a different method. 
Memana, Trudinger, and Yung \cite{MemanaTrudingerYung2026} developed a 
related two-dimensional theory 
involving fractional elementary symmetric functions and Hessian estimates.
The authors are grateful to the authors of
\cite{ChenTrudingerWang2026,MemanaTrudingerYung2026}
for sharing their preprints with them.

\subsection*{Disclosure on AI assistance}
The strategy for the proof of Theorem~\ref{main} 
was proposed by the authors, 
inspired by \cite{FabesStroock1984} and \cite{JungWoo2025}. 
In the course of proving Proposition~\ref{sharp moment esti}, 
the authors were initially able to establish the estimate only
for a restricted range of $q$. 
Through extensive interaction with AI-assisted tools 
(ChatGPT6 Astra), 
two ideas were suggested which is essential for removing 
this restriction: 
the introduction of an auxiliary parameter $t$ in the proof of 
Proposition~\ref{sharp moment esti},
and an approach based on the determinant and the $2$-Hessian 
that extends the exponent to the full range.

All theorem statements and proofs were written and verified by the authors, who take full responsibility for the contents of the paper.

\section{Proof of Theorem \ref{main}}
In this section, we will introduce the adjoint problem for 
$L$ in \eqref{eq:operator L} 
and complete the proof of Theorem \ref{main},
assuming the sharp $L^q$-estimates of adjoint solutions. 
Set $p>1$ and $q=p/(p-1)$ be its conjugate.
Throughout this paper, we always assume $n \ge 3$ and $0 <\delta < 1/n$,
as the Pucci conjecture is known for $n = 2$ and $\delta = 1/n$ corresponds
to the Laplace operator.
It is convenient to set
\begin{align}
\label{eq-a}
a = 1 - (n-1)\delta.
\end{align}

Let $x_0 \in \R^n$ and $r > 0$. Assume $f \in C^\infty(\overline{B}_r(x_0))$. Consider
\begin{align}
\label{eq:solutions-original}
\begin{cases}
a^{ij} \partial_{ij}v = -f &\text{in }B_r(x_0),\\
v = 0 &\text{on }\partial B_r(x_0),
\end{cases}
\end{align}
and the corresponding adjoint problem
\begin{align}
\label{eq:solutions-adjoint}
\qquad\,\,
\begin{cases}
\partial_i\partial_j(a^{ij}m_{r,x_0})=0 &\text{in }B_r(x_0),\\
m_{r,x_0} a^{ij} \nu_i \nu_j = 1 &\text{on }\partial B_r(x_0),
\end{cases}
\end{align}
where $\nu$ is the outward-pointing normal on $B_r(x_0)$.

We now present an important connection between \eqref{eq:solutions-original}
and \eqref{eq:solutions-adjoint}.

\begin{lemma}
\label{lemma-integral-formula-original-adjoint}
Assume $\mathcal A$ in \eqref{eq:operator L} is smooth on $\overline B_r(x_0)$.
Then, there exists a unique nonnegative smooth solution $m_{r,x_0}$ of 
\eqref{eq:solutions-adjoint} on $\overline B_r(x_0)$.
Moreover, for any $v \in C^\infty(\overline B_r(x_0))$ with $v = 0$ 
on $\partial B_r(x_0)$,
\begin{align}
\label{eq:integral-fromula}
\int_{B_r(x_0)}m_{r,x_0} Lv=\int_{\partial B_r(x_0)} v_\nu,
\end{align}
where $v_\nu$ is the normal derivative of $v$ with respect to the 
outward-pointing normal.
\end{lemma}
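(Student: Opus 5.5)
The plan is to first derive \eqref{eq:integral-fromula} from Green's identity, and then use the identity itself, through duality with the Dirichlet problem \eqref{eq:solutions-original}, to build $m_{r,x_0}$, prove it is unique, and prove it is nonnegative. We write $B=B_r(x_0)$ and $m=m_{r,x_0}$.

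\textbf{The integral identity.} Integrate $mLv=m a^{ij}\partial_{ij}v$ by parts twice. This gives
\[
\int_B m a^{ij}\partial_{ij}v=\int_B v\,\partial_{ij}(a^{ij}m)+\int_{\partial B} m a^{ij}\nu_j\partial_i v-\int_{\partial B} v\,\partial_j(a^{ij}m)\nu_i .
\]
Since $v=0$ on $\partial B$, the last boundary term drops. For the same reason $\nabla v=v_\nu\nu$ on $\partial B$, so the remaining boundary term is $\int_{\partial B} m a^{ij}\nu_i\nu_j v_\nu$. By the interior equation and the boundary condition in \eqref{eq:solutions-adjoint}, this is exactly \eqref{eq:integral-fromula}.

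\textbf{Existence and positivity via the dual problem.} Rewrite the adjoint problem in divergence-type form. Set $w=m$ and $b^i=\partial_j a^{ij}$ (smooth), so that $\partial_{ij}(a^{ij}w)=\partial_i(a^{ij}\partial_j w+b^i w)$. Set $g=1/(a^{ij}\nu_i\nu_j)$, which is smooth and positive on $\partial B$. The boundary condition becomes $w=g$. So \eqref{eq:solutions-adjoint} is a Dirichlet problem for the second-order elliptic operator $L^*w=\partial_i(a^{ij}\partial_j w)+\partial_i(b^i w)$.

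Expanded, $L^*w=a^{ij}\partial_{ij}w+(\text{first order})\,w+(\partial_{ij}a^{ij})\,w$. The zeroth-order coefficient $c=\partial_{ij}a^{ij}$ has no sign, so the classical maximum principle does not apply directly, and this is the main obstacle. I would handle it by Fredholm theory together with duality.

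First, the Dirichlet problem $L^*w=0$ in $B$, $w=0$ on $\partial B$, has only the trivial solution. Indeed, take any $h\in C^\infty_c(B)$ and solve $Lv=h$, $v=0$ on $\partial B$. This is solvable by classical Schauder theory: $L$ has no zeroth-order term, so the maximum principle holds. Green's identity with zero boundary values for both functions gives $\int_B wh=\int_B vL^*w=0$. Since $h$ is arbitrary, $w\equiv0$.

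By the Fredholm alternative for smooth elliptic operators on the smooth domain $B$, it follows that the Dirichlet problem $L^*w=0$, $w=g$ has a unique solution smooth up to $\partial B$. Uniqueness of $m$ follows as well.

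Nonnegativity also comes from the identity. For $h\ge0$ in $C^\infty_c(B)$, let $v$ solve $Lv=-h$, $v=0$ on $\partial B$. By the maximum principle $v\ge0$, so $v_\nu\le0$ on $\partial B$. Hence \eqref{eq:integral-fromula} gives
\[
-\int_B mh=\int_{\partial B}v_\nu\le0,
\]
that is, $\int_B mh\ge0$ for all such $h$. Therefore $m\ge0$ in $B$, and by continuity on $\overline B$.

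Note that the identity \eqref{eq:integral-fromula} used here was proved for any smooth $m$ solving \eqref{eq:solutions-adjoint}. So the argument is not circular: existence and uniqueness come first, then the identity, then positivity.
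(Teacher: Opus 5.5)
Your proposal is correct and follows essentially the same route as the paper. The integral identity comes from integrating by parts twice. Existence and uniqueness follow from the Fredholm alternative, once duality with the Dirichlet problem for $L$ shows the homogeneous adjoint Dirichlet problem has only the trivial solution. Nonnegativity follows by testing against $Lv=-h$ with $h\ge 0$ and using the maximum principle. Your explicit rewriting of the boundary condition as Dirichlet data $m=1/(a^{ij}\nu_i\nu_j)$ and your remark on non-circularity only make explicit what the paper leaves implicit.
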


\begin{proof}
For simplicity, we write $B = B_r(x_0)$ and $m = m_{r,x_0}$.
We first prove the existence. The kernel of homogeneous adjoint operator
is trivial: if $h=0$ on the $\partial B$ and $\partial_i\partial_j(a^{ij}h)=0$, consider the equation $L\varphi=g$, $\varphi=0$ for an arbitrary smooth function $g$. Integration by parts gives $\int_Bhg=0$, hence $h=0$. Fredholm alternative therefore give the existence of $m$ \cite[Theorems~6.14--6.15]{GilbargTrudinger2001}.

Next, we prove \eqref{eq:integral-fromula}.
By a simple integration by parts, we have
\[
 \int_Bm a^{ij} \partial_{ij}v
 =\int_B\partial_i\partial_j(ma^{ij})\,v
  +\int_{\partial B}\bigl(ma^{ij}\partial_i v\nu_j
               -\partial_j(ma^{ij})v\nu_i\bigr).
\]
The interior term vanishes by the equation in \eqref{eq:solutions-adjoint}, 
and so does the second boundary term since $v=0$ on $\partial B$.
Since $v=0$ on the sphere, $Dv=v_\nu\nu$ there;
the remaining boundary term then reduces to
\[
\int_{\partial B}ma^{ij}\nu_i\nu_jv_\nu
=\int_{\partial B}v_\nu,
\]
where we used the boundary condition in \eqref{eq:solutions-adjoint}.
This proves \eqref{eq:integral-fromula}.

Finally, we establish the sign of $m$.
Fix any $f \in C^\infty_c(B)$ with $f \ge 0$.
Let $v$ be a solution of \eqref{eq:solutions-original}.
By the maximum principle, $v \ge 0$ in $B$. In particular, 
$v_\nu \le 0$ on $\partial B$.
Hence, by \eqref{eq:integral-fromula},
$$\int_B m f \ge 0.$$
This implies $m \ge 0$ in $B_r(x_0)$.
\end{proof}

\begin{remark}
By taking $v = (|x-x_0|^2-r^2)/2$,
a simple computation using \eqref{eq:integral-fromula} yields
\begin{equation}\label{eq-average-of-m}
\langle m_{r,x_0}\rangle=n,
\end{equation}
where $\langle\,\cdot\,\rangle = \frac{1}{|B_r(x_0)|} \int_{B_{r}(x_0)}$
is the integral average over $B_r(x_0)$.
\end{remark}

In the next result, we present the main $L^q$-estimate for adjoint problem in this paper.

\begin{proposition}\label{sharp moment esti}
Assume $\mathcal A$ in \eqref{eq:operator L} is smooth in $\overline B_r(x_0)$.
Then, there exists a unique nonnegative solution of
\eqref{eq:solutions-adjoint}
and, for any $n/(n-1)<q<\min\{2,na/(na-1)\}$ when $n\geq 4$, 
or $n/(n-1)<q<na/(na-1)$ when $n=3$, 
\begin{equation}\label{momentbound}
\langle m_{r,x_0}^q\rangle^{1/q}\le C_q,
\end{equation}
where $C_q$ is a positive constant depending only on $n$, $\delta$, and $q$.
\end{proposition}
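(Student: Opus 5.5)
By scaling and translation we may take $x_0=0$ and $r=1$, and write $B=B_1$, $m=m_{1,0}$. Existence, uniqueness and nonnegativity come from Lemma~\ref{lemma-integral-formula-original-adjoint}, so what remains is the moment bound \eqref{momentbound}.

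The approach is duality combined with a family of test functions built from powers of $m$, in the spirit of Fabes--Stroock. By \eqref{eq:integral-fromula}, any $v$ vanishing on $\partial B$ gives
\[
\int_B m\,Lv=\int_{\partial B}v_\nu .
\]
I will choose $v$ so that $Lv$ controls a power of $m$ from below while $v_\nu$ stays bounded.

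First I would test against radial functions $v=\phi(|x|)$ with $\phi(1)=0$. One has
\[
Lv=\phi''(\rho)\,a^{\nu\nu}+\frac{\phi'(\rho)}{\rho}\bigl(1-a^{\nu\nu}\bigr),\qquad \rho=|x|,\quad a^{\nu\nu}=a^{ij}\frac{x_ix_j}{\rho^2}.
\]
By \eqref{eq:noramlization}, $a^{\nu\nu}\in[\delta,a]$. Take $\phi(\rho)=(1-\rho^{2-\gamma})/(\gamma-2)$ for suitable $\gamma$. Then $Lv$ is a multiple of $\rho^{-\gamma}\bigl(1-(\gamma-1)\cdots\bigr)$, and the ellipticity bounds show that $\rho^{-\gamma}$ is bounded below by $c\,Lv$ precisely when $\gamma<\gamma^*$, where $\gamma^*$ is determined by $a$. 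This gives weighted estimates
\[
\int_B m\,|x|^{-\gamma}\le C,
\]
valid uniformly in the centre, since one can re-centre at any point using $B_{1}(y)\supset$ a smaller ball and comparison. These are Morrey-type bounds, and they already suggest the threshold $na/(na-1)$ via the fundamental solution $|x|^{2-1/a}$ of the extremal Pucci operator.

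The main step is to upgrade the Morrey bounds to $L^q$ bounds. Here I would test with $v$ solving $Lv=-m^{q-1}$. For this to be useful one needs a bound on $\sup_{\partial B}|v_\nu|$, or on $\sup v$, in terms of $\|m^{q-1}\|$ in a norm dual to what we control. The natural tool is a Kuo--Trudinger/Aleksandrov-type estimate using $\det D^2v$ and the $2$-Hessian $\sigma_2(D^2v)$. Under \eqref{eq:noramlization}, $a^{ij}D_{ij}v$ bounds a combination of $\sigma_k(D^2v)^{1/k}$ with weights depending on $\delta$. The $\sigma_2$ piece gives $L^{p}$ control for $p>n/2$, and the $\det$ piece gives control for $p>n(1-(n-1)\delta)=na$, which matches the dual ranges $q<2$ and $q<na/(na-1)$.

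Concretely, I would introduce an auxiliary parameter $t$ and interpolate: set
\[
v=v_1+t\,v_2,
\]
where $v_1$ is a Hessian-type barrier and $v_2$ is the radial function above. Then $\int m\,Lv\le C$ becomes an inequality of the form
\[
\int m^{q}\le C+\epsilon\int m^q,
\]
and absorbing the last term, after optimizing in $t$, yields \eqref{momentbound}. The lower bound $q>n/(n-1)$ is needed so that the boundary normal derivative remains controlled.

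I expect the main obstacle to be this absorption step: making the Hessian-measure inequality for $L$ sharp enough that the constant is below $1$ throughout the full range. That requires choosing the combination of $\det$ and $\sigma_2$ carefully, with the case $n=3$ handled separately because there $2\ge na/(na-1)$ is automatic.
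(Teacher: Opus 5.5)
Your reduction to $B_1$ and your radial computation are correct. With $\phi'(\rho)=\rho^{1-\gamma}$ you get $Lv=\rho^{-\gamma}(1-\gamma a^{\nu\nu})\ge(1-\gamma a)\rho^{-\gamma}$ and $v_\nu=1$ on $\partial B_1$, hence $\int_{B_1}m|x|^{-\gamma}\le|\partial B_1|/(1-\gamma a)$ for $\gamma<1/a$. But this holds at the centre only. A function radial about $y\neq0$ does not vanish on $\partial B_1$, so re-centring needs more than the one-line comparison you indicate. More importantly, even uniform bounds of this type are Morrey/Riesz-potential bounds. They are compatible with $m$ being a singular measure of dimension below $n$ (note $1/a<n$), so they give no $L^q$ integrability by themselves. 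A Gehring or Fabes--Stroock reverse-H\"older upgrade yields only $q=1+\epsilon$ with an inexplicit $\epsilon$, never the range up to $na/(na-1)$.

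The genuine gap is the main step. Testing with $Lv=-m^{q-1}$ gives $\int_{B_1}m^q=-\int_{\partial B_1}v_\nu$. Bounding this flux by $C\|m^{q-1}\|_{L^p}$ is, by duality, \emph{equivalent} to the estimate you are trying to prove, namely the $L^p$ Pucci bound on the ball. Appealing to an unspecified ``Kuo--Trudinger/Aleksandrov-type'' combination of $\det$ and $\sigma_2$ assumes exactly what was open; such arguments gave only integer or restricted exponents.

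Your plan $v=v_1+tv_2$ does not repair this. If $v_1,v_2$ are fixed barriers, $\int m\,Lv$ is linear in $m$ and no $\int m^q$ can appear. If $v_1$ depends on $m$, you are back to the dual estimate, and nothing explains why an absorption constant below $1$ exists on the whole range.

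The missing idea is an auxiliary function built from $m$ whose boundary term closes on itself. The paper solves $\det D^2\psi=\det(m\mathcal A+tI)^{1/(n-1)}$ with $\psi=0$ on $\partial B_1$. It then combines
\begin{itemize}
\item $(m\mathcal A+tI):D^2\psi\ge n\det(m\mathcal A+tI)^{1/(n-1)}$,
\item $\int_{B_1}(m\mathcal A+tI):D^2\psi=(1+t)\int_{\partial B_1}\psi_\nu$,
\item $\int_{B_1}\det D^2\psi=\frac1n\int_{\partial B_1}\psi_\nu^n$,
\end{itemize}
and H\"older's inequality to get $\langle\det(m\mathcal A+tI)^{1/(n-1)}\rangle\le(1+t)^{n/(n-1)}$, with no a priori input. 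Concavity on the eigenvalue simplex turns this into $\langle(t+am)^{1/(n-1)}(t+\delta m)\rangle\le(1+t)^{n/(n-1)}$, an Euler-type differential inequality in $t$. After subtracting the large-$t$ asymptotics (using $\langle m\rangle=n$), a Mellin transform against $t^{q-1-n/(n-1)}$ yields $\langle m^q\rangle\le C_q^q$ with the factor $1-q(na-1)/(na)$. That factor is where the threshold $na/(na-1)$ comes from. The condition $n/(n-1)<q<2$ is simply the convergence range of that Mellin integral, not a boundary-flux condition. So the parameter $t$ is a shift $tI$ of the matrix $m\mathcal A$, not the coefficient of a second barrier.

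Finally, your reason for treating $n=3$ separately is backwards. One has $3a/(3a-1)>2$ whenever $a<2/3$, and it is exactly the range $2\le q<3a/(3a-1)$ that needs a separate argument. The paper handles it with a truncated $\sigma_2$ to prove $\langle\mathcal F_p(D^2\psi)\rangle\ge\langle\Delta\psi\rangle^p$, plus a reverse-Young duality with $\Delta\psi$ chosen as a function of $m\mathcal A$. For $n\ge4$ one only needs $q<n/(n-2)\le2$.
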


We postpone the proof of Proposition \ref{sharp moment esti} to the next section. 
In the following, we will show how to apply this estimate to derive the Pucci's conjecture.

\begin{lemma}
Assume $\mathcal A$ in \eqref{eq:operator L} is smooth and
$f\in L^p(B_r(x_0))$ with $f \ge 0$.
Let $v_r$ and $v_{r/2}$ be solutions of \eqref{eq:solutions-original}
on $B_r(x_0)$ and $B_{r/2}(x_0)$, respectively.
Then, for any $p_* < p < n$, we have
\begin{equation}\label{ballestimate-1}
 v_r(x_0)\le C(n,\delta,q)r^{2-n/p}\norm f_{L^p(B_r(x_0))},
\end{equation}
for some constant $C(n,\delta,q)$ depending only on $n$, $\delta$, and $q$.
\end{lemma}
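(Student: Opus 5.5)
The plan is to combine the integral formula \eqref{eq:integral-fromula} with Proposition~\ref{sharp moment esti} to control the boundary flux of $v_r$, and then convert this flux bound into a pointwise bound at the center. Write $B_s=B_s(x_0)$, $m_s=m_{s,x_0}$, and let $q=p/(p-1)$. The hypothesis $p>p_*$ gives $q<\min\{2,na/(na-1)\}$. The assumption $p<n$ gives $q>n/(n-1)$. So the Proposition applies. By approximation, it suffices to treat smooth $f\ge0$.

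\textbf{Step 1: flux bound.} For each $s\le r$, apply \eqref{eq:integral-fromula} on $B_s$ to $v_s$, the solution of \eqref{eq:solutions-original} on $B_s$. Then use H\"older and \eqref{momentbound}:
\[
-\int_{\partial B_s}(v_s)_\nu=\int_{B_s}m_s f\le |B_s|\,\langle m_s^q\rangle^{1/q}\,\Bigl(\tfrac{1}{|B_s|}\int_{B_s}f^p\Bigr)^{1/p}\le C s^{\,n-n/p}\norm f_{L^p(B_s)}.
\]

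\textbf{Step 2: from flux to the values on $\partial B_{s/2}$.} In the annulus $B_s\setminus \overline B_{s/2}$, use a radial barrier $\phi(x)=|x-x_0|^{-\alpha}-s^{-\alpha}$ with $\alpha=\alpha(n,\delta)$ large. Uniform ellipticity \eqref{eq:noramlization} gives $L\phi\ge0$ there. Since $Lv_s=-f\le0$, $v_s\ge0$, and $\phi=0$ on $\partial B_s$, the maximum principle gives
\[
v_s\ge \frac{\min_{\partial B_{s/2}}v_s}{\max_{\partial B_{s/2}}\phi}\,\phi
\]
in the annulus. Comparing normal derivatives on $\partial B_s$ then gives
\[
-\int_{\partial B_s}(v_s)_\nu\ge c\,s^{n-2}\min_{\partial B_{s/2}}v_s .
\]
Combined with Step 1, this yields $\min_{\partial B_{s/2}}v_s\le C s^{2-n/p}\norm f_{L^p}$.

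\textbf{Step 3: telescoping.} Set $r_k=2^{-k}r$ and write
\[
v_r(x_0)=\sum_{k\ge0}\bigl(v_{r_k}-v_{r_{k+1}}\bigr)(x_0).
\]
This is legitimate since $v_s(x_0)\to0$ as $s\to0$ for smooth $f$. Each difference $h_k=v_{r_k}-v_{r_{k+1}}$ is nonnegative in $B_{r_{k+1}}$, satisfies $Lh_k=0$ there, and equals $v_{r_k}$ on $\partial B_{r_{k+1}}$. Hence
\[
h_k(x_0)\le \max_{\partial B_{r_{k+1}}}v_{r_k}.
\]
Once each term is bounded by $C r_k^{2-n/p}\norm f_{L^p(B_r)}$, the geometric series converges because $2-n/p>0$ (as $p>n/2$). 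Summing gives \eqref{ballestimate-1}.

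\textbf{Main obstacle.} The hard point is passing from the \emph{minimum} of $v_{r_k}$ on $\partial B_{r_{k+1}}$ (Step 2) to the \emph{maximum} needed in Step 3. Harnack does not apply directly to $v_{r_k}$, because it is only a supersolution.

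The first thing I would try is to split $f=f\chi_{B_{3r_k/4}}+f\chi_{B_{r_k}\setminus B_{3r_k/4}}$ and write $v_{r_k}=w_1+w_2$ accordingly:
\begin{itemize}
\item $w_1$ is $L$-harmonic near $\partial B_{r_{k+1}}$, so the Krylov--Safonov Harnack inequality on a covering of that sphere gives $\max_{\partial B_{r_{k+1}}} w_1\le C\min_{\partial B_{r_{k+1}}} w_1$. The right-hand side is at most $\min_{\partial B_{r_{k+1}}} v_{r_k}$, which Step 2 controls.
\item $w_2$ is handled by repeating the same argument on balls of comparable radius centered at points of $\partial B_{r_{k+1}}$, or by bounding $w_2\le v_{r_k}$ and absorbing.
\end{itemize}
If this absorption does not close, the fallback is to run the flux argument directly with the adjoint solution centered at the point where the maximum is attained. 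That point lies on $\partial B_{r_{k+1}}$, so the corresponding ball of radius $r_k/2$ sits inside $B_{r_k}$, and the constants stay uniform.
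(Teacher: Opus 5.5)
Your Steps 1 and 2 are sound, and the barrier works with $\alpha=\delta^{-1}-2$. Step 3, however, has a genuine gap. It is exactly the one you flag, and neither proposed remedy closes it. The bound $h_k(x_0)\le\max_{\partial B_{r_{k+1}}}v_{r_k}$ demands a pointwise upper bound for the supersolution $v_{r_k}$ on a sphere, and that is an estimate of the same strength as the lemma itself.

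\emph{The splitting.} As written, $w_1$ comes from $f\chi_{B_{3r_k/4}}$, whose support contains $\partial B_{r_k/2}$, so $w_1$ is not $L$-harmonic there. If you swap the roles, the offending piece is $w_2$. Either way, the piece of $f$ living near $\partial B_{r_{k+1}}$ reproduces the original problem, and there is no small factor to absorb.

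\emph{The re-centering fallback.} This is circular. The flux argument on a ball centered at $y\in\partial B_{r_{k+1}}$ controls only a minimum, over a sphere around $y$, of the solution on that smaller ball. That solution lies below $v_{r_k}$, and the argument never gives the value $v_{r_k}(y)$.

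\textbf{The fix.} Apply Harnack to $h_k$, not to $v_{r_k}$. You already observed that $h_k\ge0$ and $Lh_k=0$ in $B_{r_{k+1}}$. The interior Harnack inequality therefore gives $h_k(x_0)\le C(n,\delta)\min_{\partial B_{r_{k+2}}}h_k$. Since $v_{r_{k+1}}\ge0$, you also have $h_k\le v_{r_k}$. Hence $h_k(x_0)\le C\min_{\partial B_{r_k/4}}v_{r_k}$. This is a \emph{minimum}, which is exactly what Step 2 controls once you run the barrier on $B_{r_k}\setminus\overline B_{r_k/4}$ instead of $B_{r_k}\setminus\overline B_{r_k/2}$.

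With this one change your outline becomes the paper's proof:
\begin{enumerate}
\item Harnack for $v_r-v_{r/2}$ gives $v_r\ge c\,(v_r-v_{r/2})(x_0)$ on $\partial B_{r/4}$.
\item The radial barrier on $B_r\setminus\overline B_{r/4}$ converts this into a lower bound on the flux $-\int_{\partial B_r}\partial_\nu v_r$.
\item The integral formula, H\"older, and the $L^q$ bound on $m_{r,x_0}$ bound that flux by $C r^{n-n/p}\norm f_{L^p}$.
\item Telescoping over $r_j=2^{-j}r$ finishes the argument.
\end{enumerate}

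A minor point: for $n=3$, $p>p_*$ does not give $q<2$; it only gives $q<\min\{3,3a/(3a-1)\}$. This is harmless, because the proposition for $n=3$ requires only $3/2<q<3a/(3a-1)$.
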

\begin{proof}
Without loss of generality, we assume $x_0=0$. 
We point out that for $q = p/(p-1)$, the assumption of 
Proposition \ref{sharp moment esti} is satisfied.
In fact, with $p_*$ as in \eqref{eq:exponent restriction}, 
$q = p/(p-1)$ satisfies
\[
 n/(n-1)<q< p_*/(p_*-1) = \min\{na/(na-1),n/(n-2)\}.
\]
For $n = 3$, the assumption holds trivially; and for $n \ge 4$, $n/(n-2)\le 2$,
which implies the desired assumption.
We first assume that $f$ is smooth near $\overline B_r$.
By the maximum principle, $v_s\ge 0$ for $s=r, r/2$,
and $v_r \ge v_{r/2}$ on $B_{r/2}$.
Hence, the difference $h=v_r-v_{r/2}$ satisfies $Lh=0$ in $B_{r/2}$ with 
$h \ge 0$ on $\partial B_{r/2}$. 
By the interior Harnack inequality 
\cite[Corollary~9.25]{GilbargTrudinger2001}, we have
\begin{align}
\label{eq:lower bound v_r}
 v_r\ge h\ge c(n,\delta)h(0)\quad\hbox{on }\partial B_{r/4}.
\end{align}

On $B_r\setminus\overline B_{r/4}$, define the function $\eta$ by
\[
 \eta(x)=\frac{(|x|/r)^{-(\delta^{-1}-2)}-1}{4^{\delta^{-1}-2}-1}.
\]
Note that $\eta\geq 0$ in $B_r\setminus\overline B_{r/4}$
with $\eta\equiv1$ on $\partial B_{r/4}$ and 
$\eta\equiv0$ on $\partial B_{r}$.
By \eqref{eq:noramlization}, it is straightforward to check that
\[
 L\eta\ge0.
\]
Indeed, the specific power is chosen to achieve this.
Now, by our assumption $f \ge 0$, $Lv_r=-f\le0 \le L \eta$ 
in $B_r\setminus\overline B_{r/4}$. 
On the other hand, by \eqref{eq:lower bound v_r} and 
the construction of $\eta$,
$v_r\ge c(n,\delta)h(0) = c(n,\delta)h(0)\eta$ on $\partial B_{r/4}$, 
and both vanish on $\partial B_{r}$.
The comparison principle then gives
\[
v_r\ge c(n,\delta)h(0)\eta \qquad \text{ in }B_r\setminus\overline B_{r/4}.
\]
In particular, the outward normal derivatives of $v_r$ 
and $c(n,\delta)h(0)\eta$ satisfy
\begin{equation}\label{eq: par vr}
 -\partial_\nu v_r\ge
 \frac{c(n,\delta)(\delta^{-1}-2)}{r(4^{\delta^{-1}-2}-1)}h(0) 
 \qquad\text{on }\partial B_r.
\end{equation}
Integrating \eqref{eq: par vr} on $\partial B_r$ and 
applying \eqref{eq:integral-fromula},
\[
0\leq h(0)\le C r^{2-n}\int_{\partial B_r} -\partial_\nu v_r=C r^{2-n}\int_{B_r} m_{r,0} f.
\]
By Proposition~\ref{sharp moment esti} and H\"older's inequality,
we obtain
\begin{equation}
\begin{aligned}
r^{2-n}\int_{B_r} m_{r,0} f  &\leq r^{2-n}\|m_{r,0}\|_{L^q(B_r)} \cdot\|f\|_{L^p(B_r)}\\
& = r^{2-n}|B_r|^{1/q}\langle m_{r,0}^q \rangle^{1/q}\cdot\|f\|_{L^p(B_r)}
\leq C r^{2-n/p} \|f\|_{L^p(B_r)},
\end{aligned}
\end{equation}
for some constant $C$ depending only on $n$, $\delta$, and $q$.
In sum, we obtain
\begin{equation}\label{eq: vr difference real}
\begin{aligned}
0\le v_r(0)-v_{r/2}(0)
 \le C(n,\delta, q)r^{2-n/p}\norm f_{L^p(B_r)}.
\end{aligned}
\end{equation}

We now iterate \eqref{eq: vr diffence of rj} to obtain \eqref{ballestimate-1}.
Set $r_j=2^{-j}r$. Applying \eqref{eq: vr difference real} on $B_{r_j}$,
we have
\begin{align}
\label{eq: vr diffence of rj}
v_{r_j}(0)-v_{r_{j+1}}(0)\le C(n,\delta, q)r_j^{2-n/p}\norm f_{L^p(B_{r})}.
\end{align}
We point out that $v_{r_j}(0) \rightarrow 0$ as $j \rightarrow \infty$.
In fact, the function
\[
 w_j(x)=\tfrac12|f|_{L^\infty(B_r)}
                  (r_j^2-|x|^2)
\]
satisfies $-Lw_j=|f|_{L^\infty(B_r)}$, because $\tr \mathcal A=1$.
Comparison principle then gives $0\le v_{r_j}(0)\le w_j(0)\to0$.
By \eqref{eq: vr diffence of rj}, $2 - n/p >0$, and a simple summation,
we then obtain \eqref{ballestimate-1} for smooth $f$ near $\overline B_r$.
For general $f \in L^p(B_r)$, we choose a nonnegative sequence 
$f_j \in C^\infty(\overline B_r)$ such that $f_j \rightarrow f$ in $L^p(B_r)$.
Let $v_j$ be a solution of \eqref{eq:solutions-original} 
with $f$ replaced by $f_j$.
The existence follows from classical $W^{2,p}$-theory \cite[Chapter 9]{GilbargTrudinger2001}.
Note that $p > n/2$. Then, elliptic estimates and the Sobolev embedding yield
$$|v_j - v|_{L^\infty (B_r)} \le C  \| f_j - f \|_{L^p(B_r)},$$
for some positive constant $C$.
Passing to the limit then proves \eqref{ballestimate-1} for $f \in L^p(B_r)$.
\end{proof}

\begin{proof}[Proof of Theorem~\ref{main}]
First, it suffices to consider $p_*<p<n$, the case $p\ge n$ follows from
classical Aleksandrov-Bakel’man maximum principle \eqref{ABP} and 
the H\"older's inequality. 

We first assume the coefficient $\mathcal A$ is smooth on $\R^n$
and $u \in C^\infty(\Omega)\cap C(\overline \Omega)$.
Denote by $f$ the trivial extension of $(Lu)^-$ to $\R^n$,
by setting the value to be zero outside $\Omega$.
Fix $x \in \Omega$.
Let $v$ be the solution of \eqref{eq:solutions-original} in $B_{2D}(x)$,
where $D = \mathrm{diam} \,\Omega$.
Then, $v \ge 0$ by the maximum principle, and $\Omega \subseteq B_{2D}(x)$.
Moreover, we have
$$L(u-v)=Lu+(Lu)^-\ge0 \text{ in } \Omega\quad \text{and}\quad 
u-v\le\sup_{\partial\Omega}u \text{ on }\partial\Omega.$$
Comparison principle then gives
$u(x)\le\sup_{\partial\Omega}u+v(x)$ and hence, by \eqref{ballestimate-1}, 
\begin{align*}
u(x)&\le\sup_{\partial\Omega}u+ C D^{2-n/p} \|f\|_{L^p(B_{2D}(x))} \\
&\le \sup_{\partial\Omega}u+
 C(\diam\Omega)^{2-n/p}
 \|(Lu)^-\|_{L^p(\Omega)} .
\end{align*}
This proves \eqref{eq: main esti} if $\mathcal A$ is smoothly 
defined on $\R^n$ and $u \in C^\infty(\Omega)$.
By a simple approximation, the estimate holds for $u \in W^{2,p}(\Omega)$
with the same constant.

We next consider general bounded measurable $\mathcal A$ defined on $\Omega$.
We extend $\mathcal A$ by $I/n$ outside $\Omega$ and mollify 
$\mathcal A$ by its convolution with the mollifier, 
i.e. $\mathcal A_\varepsilon=\rho_\varepsilon*\mathcal A$. 
A direct calculation then gives
\[
 \tr \mathcal A_\varepsilon=1,\qquad \mathcal A_\varepsilon\ge\delta I.
\]
In particular, \eqref{eq: main esti} holds for the operator
$L_{\mathcal A_\epsilon}: u \mapsto \mathcal A_\epsilon: D^2 u$.
Then, for $u \in W^{2,p}(\Omega)$, the dominated convergence theorem yields
\[
 \|(L_{A_\varepsilon}u)^--(Lu)^-\|_{L^p(\Omega)}
 \le \|(A_\varepsilon-A):D^2u\|_{L^p(\Omega)}
 \rightarrow0.
\]
Hence, the estimate \eqref{eq: main esti} can pass to bounded measurable 
$\mathcal A$ with the same constant, provided $u \in W^{2,p}(\Omega)$.
Finally, 
for general $u \in W^{2,p}_{\text{loc}} (\Omega) \cap C(\overline \Omega)$,
we apply the estimate on $\Omega_j = \{x \in \Omega;\,\mathrm{dist}(x,\partial \Omega)> 1/j\}$ for each $j \ge 1$.
Then, a simple approximation yields the desired estimate.
\end{proof}

\section{Sharp integral estimates of adjoint solution}
In this section, we give the proof of the Proposition \ref{sharp moment esti}. We will consider $q <2$ and $q \ge 2$ separately.
Throughout this section, we write
\begin{align}
\label{eq-C_q}
C_q=a^{-1}\Big[1-\frac{q(na-1)}{na}\Big]^{-1/q}.
\end{align}

We first consider the $n/(n-1)<q<\min\{2,na/(na-1)\}$.
This covers the case $n\ge 4$ in Proposition \ref{sharp moment esti}.
\begin{lemma}\label{sharp esti 1}
Let $x_0 \in \R^n$ and $r > 0$.
Assume $\mathcal A$ in \eqref{eq:operator L} is smooth on $\overline B_r(x_0)$.
Then, for any $n/(n-1)<q<\min\{2,na/(na-1)\}$,
\begin{equation}\label{eq:momentbound 2}
\langle m_{r,x_0}^q\rangle^{1/q}\le C_q,
\end{equation}
where $C_q$ is given as in \eqref{eq-C_q}.
\end{lemma}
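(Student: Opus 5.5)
The plan is to extract the $L^q$ bound from the integral identity \eqref{eq:integral-fromula} by testing against carefully chosen functions $v$ vanishing on $\partial B_r(x_0)$, combined with the normalization \eqref{eq:noramlization}. By translation and scaling, $m_{r,x_0}(x_0+r\,\cdot)$ is the adjoint solution on the unit ball for the rescaled coefficients, so I take $x_0=0$, $r=1$, $B=B_1$, $m=m_{1,0}$. The basic observation is the one already used for $\eta$ and in \eqref{eq-average-of-m}. For a radial $v=\psi(\rho)$, $\rho=|x|$, one has
\[
Lv=\theta\,\psi''+(1-\theta)\,\psi'/\rho,\qquad \theta=\frac{a^{ij}x_ix_j}{\rho^2}\in[\delta,a],
\]
with $a$ as in \eqref{eq-a}. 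The upper bound $\theta\le a$ is the only place the sharp exponent can enter: it produces the factor $1-\beta a$ for power weights $\rho^{-\beta}$.

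\emph{Step 1: weighted $L^1$ bounds.} Take $\psi(\rho)=(\rho^{2-\beta}-1)/(2-\beta)$ with $0<\beta<1/a$. Then
\[
-Lv=\rho^{-\beta}\bigl(1-(\beta-1)\theta\bigr)-\ldots\ \ge\ (1-\beta a)\rho^{-\beta}
\]
up to the sign bookkeeping. Since $v_\nu=1$ on $\partial B$, this gives
\[
(1-\beta a)\int_B m\,\rho^{-\beta}\le|\partial B|.
\]
The same computation applies with the weight centred at any point $y\in B$, and more generally with $v$ vanishing only on $\partial B$ (a modified barrier). The aim is a uniform bound
\[
\int_B m(x)\,|x-y|^{-\beta}\,dx\le C(1-\beta a)^{-1}\quad\text{for all }y\in B.
\]
For off-centre $y$ I would use the test $\psi_y(x)=\Phi(|x-y|)-\ell_y(x)$, with $\ell_y$ a quadratic correction making it vanish on $\partial B$. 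Since $\tr\mathcal A=1$, the correction changes $Lv$ only by a bounded amount, which is controlled by \eqref{eq-average-of-m}.

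\emph{Step 2: from weighted $L^1$ to $L^q$, using $q<2$.} Write
\[
\int_B m^q=\int_B m\cdot m^{q-1},\qquad 0<q-1<1.
\]
I would introduce the auxiliary parameter $t$, the one alluded to in the disclosure, by testing \eqref{eq:integral-fromula} with $v=w^{t}$-type modifications. Here $w$ solves $Lw=-m^{q-1}$-like data; alternatively, in Fabes--Stroock fashion, apply the adjoint equation to $\varphi=(m+\varepsilon)^{q-1}\zeta$ and integrate by parts once. Concavity of $s\mapsto s^{q-1}$ for $q<2$ gives the favourable sign of the term $m\,a^{ij}D_{ij}(m^{q-1})$. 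The resulting inequality should take the form
\[
\Bigl(1-\tfrac{q(na-1)}{na}\Bigr)\int_B m^q\le a^{-q}\,\bigl(\text{boundary/average terms}\bigr),
\]
and these terms are bounded by Step 1 and \eqref{eq-average-of-m}. This matches the shape of $C_q$ in \eqref{eq-C_q}. The requirement $q<na/(na-1)$ is exactly positivity of the bracket, and $q>n/(n-1)$ keeps the relevant radial exponents admissible.

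\emph{Main obstacle.} The hard part is Step 2: choosing the nonlinear test function and the parameter $t$ so that the error terms involving $a^{ij}D_im\,D_jm$ are absorbed with exactly the constant $1-q(na-1)/(na)$, rather than a cruder one that restricts $q$. I would first try $\varphi=m^{q-1}\rho^{-t}$ with $t$ optimized. The Cauchy--Schwarz step $2\,a^{ij}D_i m\,D_j\rho\le\ldots$ would be weighted by $t$, and the worst direction bounded using $\theta\le a$. If the gradient terms cannot be closed this way, the fallback is to interpolate the uniform weighted bounds of Step 1 via a Hardy--Littlewood--Sobolev/layer-cake argument. That route yields the range $q<na/(na-1)$, but probably with a non-explicit constant.
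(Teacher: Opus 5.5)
\textbf{There is a genuine gap in Step 2.} Your Step 1 computation is correct for the centred weight. You have $Lv=\rho^{-\beta}(1-\beta\theta)\ge(1-\beta a)\rho^{-\beta}$ and $v_\nu=1$, so $(1-\beta a)\int_B m\rho^{-\beta}\le|\partial B|$, with equality for the extremal example in the final remark. But Step 2 carries the whole content of the lemma, and it does not work.

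\emph{Testing with a power of $m$.} Testing the adjoint equation against $(m+\varepsilon)^{q-1}\zeta$ produces the term $(q-1)\int m^{q-1}\zeta\,a^{ij}D_{ij}m$, plus boundary terms containing $\partial_j(ma^{ij})$; concavity controls only the separate gradient term. Handling $a^{ij}D_{ij}m$ through $\partial_{ij}(a^{ij}m)=0$ brings in $D\mathcal A$ and $D^2\mathcal A$. The constant must depend only on $n,\delta,q$, and adjoint solutions with merely measurable coefficients have no uniform derivative control. So none of these terms can be absorbed. The only usable information about $m$ is the duality identity \eqref{eq:integral-fromula} together with pointwise algebra.

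\emph{Solving $Lw=-m^{q-1}$.} This variant is circular. Writing $\int_B m^q=\int_{\partial B}(-w_\nu)$ and bounding the flux by $\|m^{q-1}\|_{L^p}$ is exactly the $L^p$ estimate for $Lw=f$ that this lemma is meant to yield by duality.

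\emph{The fallback.} The off-centre barrier does not exist as described: a quadratic $\ell_y$ cannot match $|x-y|^{2-\beta}$ on $\partial B$. Even granting $\sup_y\int_B m|x-y|^{-\beta}\,dx\le C$ for all $\beta<1/a$, such Morrey/Riesz-potential bounds with $\beta<n$ never control $\|m\|_{L^q}$ for $q>1$. For instance, when $1/a<n-1$, take $m_\varepsilon=\varepsilon^{-1}\chi_{\{|x_n|<\varepsilon\}\cap B_1}$. Its potentials stay bounded, while $\int m_\varepsilon^q\sim\varepsilon^{1-q}$. In any case that route would not give the explicit $C_q$.

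\textbf{The missing idea} is to break the circularity with a \emph{nonlinear} auxiliary problem whose boundary flux is controlled by its own interior integral.
\begin{enumerate}
\item Fix $t>0$ and solve the Monge--Amp\`ere problem $\det D^2\psi=\det(m\mathcal A+tI)^{1/(n-1)}$, $\psi=0$ on $\partial B_1$.
\item The matrix AM--GM inequality gives $(m\mathcal A+tI):D^2\psi\ge n\det(m\mathcal A+tI)^{1/(n-1)}$ pointwise, with no derivatives of $m$ or $\mathcal A$.
\item Integrating and using \eqref{eq:integral-fromula} turns the left side into $(1+t)\int_{\partial B_1}\psi_\nu$.
\item H\"older's inequality and $\int_{B_1}\det D^2\psi=\frac1n\int_{\partial B_1}\psi_\nu^n$ close the estimate: $\langle\det(m\mathcal A+tI)^{1/(n-1)}\rangle\le(1+t)^{n/(n-1)}$.
\item The sharp exponent enters through concavity of $\sum_i\log(t+m\lambda_i)$ on the eigenvalue simplex, not through a radial barrier. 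This gives $\langle(t+am)^{1/(n-1)}(t+\delta m)\rangle\le(1+t)^{n/(n-1)}$.
\item That inequality is an Euler-type differential inequality for $F(t)=\langle(t+am)^{n/(n-1)}\rangle$. Subtract its two leading terms at $t=\infty$, using $\langle m\rangle=n$. Then integrate against $t^{q-1-n/(n-1)}$ to obtain exactly $\bigl(1-\frac{q(na-1)}{na}\bigr)a^q\langle m^q\rangle\le1$, which is the bound with $C_q$.
\end{enumerate}
This is also where $n/(n-1)<q<2$ comes from: convergence of the weighted $t$-integral and vanishing of the boundary terms.
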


\begin{proof}
Without loss of generality, we assume $B_r(x_0) = B_1$.
For simplicity, we write $m = m_{r,x_0}$.
We first derive an important estimate from the adjoint PDE.
The starting point is the following classical divergence identity 
for the Monge–Ampère operator 
\cite{TrudingerWang2008}. For any $C^3$-function $\psi$,
$$n \det D^2 \psi = \partial_i((\operatorname{cof} D^2 \psi)_{ij} \partial_j \psi).$$
Hence, for any $\psi \in C^3(\overline B_1)$ with $\psi = 0$ on $\partial B_1$,
the divergence theorem yields
\begin{equation}\label{eq:Z idnetity 2}
    \int_{B_1} \det D^2 \psi=\frac1n\int_{\partial B_1}\psi_\nu^n.
\end{equation}
We point out that the tangential block of $D^2 \psi$ on $\partial B_1$ is
$\psi_\nu I$.
We will use \eqref{eq:Z idnetity 2} to derive an estimate for a one-parameter
family of integrals.
Fix $t>0$.
By Lemma \ref{lemma-integral-formula-original-adjoint},
$\det (m\mathcal A + tI)^{1/(n-1)}$ is a smooth strictly positive function
in $\overline B_1$.
Then, the classical existence theory for Dirichlet problem of 
Monge–Ampère equation \cite[Theorem~3, p.~264, $k=n$]{CNS1985}, 
there exists a smooth solution $\psi$ of the equation
\begin{align}
\label{eq-MA-special-sol}
\det D^2\psi=\det (m\mathcal A + tI)^{1/(n-1)},\quad \psi|_{\partial B}=0,
 \quad D^2\psi>0.
\end{align}
By a simple estimate, we have
\begin{equation}\label{eq: T hessian psi}
\begin{aligned}
 (m\mathcal A + tI):D^2\psi&\ge n(\det (m\mathcal A + tI)\det D^2\psi)^{1/n}\\
 &=n\det (m\mathcal A + tI)^{1/(n-1)},
\end{aligned}
\end{equation}
where the middle equality uses $\det D^2\psi= \det (m\mathcal A + tI)^{1/(n-1)}$.
The choice of the power $1/(n-1)$ is to establish such an estimate.
By \eqref{eq:integral-fromula} and a simple integration by parts,
integrating \eqref{eq: T hessian psi} over $B_1$ yields
\begin{equation}\label{eq:Z idnetity 1}
 n\int_{B_1} \det (m\mathcal A + tI)^{1/(n-1)}\le\int_{B_1} (m\mathcal A + tI):D^2\psi=(1+t)\int_{\partial B_1}\psi_\nu.
\end{equation}
Note that $D^2 \psi > 0$ in $B_1$ and $\psi = 0$ on $\partial B_1$
imply $\partial_\nu \psi \ge 0$ on $\partial B_1$.
Then, the H\"older's inequality yields
$$n\int_{B_1} \det (m\mathcal A + tI)^{1/(n-1)}\le(1+t)|\partial B|^{(n-1)/n}\Big( \int_{\partial B_1} \psi_\nu^n\Big)^{1/n}.$$
By \eqref{eq:Z idnetity 2} and
the equation for $\psi$ in \eqref{eq-MA-special-sol},
the last term can be written as
\begin{align*}
 \Big( \int_{\partial B_1} \psi_\nu^n\Big)^{1/n} = \Big( n\int_{ B_1} \det (m\mathcal A + tI)^{1/(n-1)}\Big)^{1/n}.
\end{align*}
A simple calculation then yields
\begin{equation}\label{detbound}
 \big\langle\det(m\mathcal A+tI)^{1/(n-1)}\big\rangle\le(1+t)^{n/(n-1)}.
\end{equation}
We point out that, by letting $t\downarrow 0$, \eqref{detbound} yields
\[
 \big\langle\det(m\mathcal A)^{1/(n-1)}\big\rangle\le1.
\]
This gives an upper bound of $m^{n/(n-1)}$, since $\det \mathcal A\ge\delta^n$.
The purpose of introducing the auxiliary variable $t$ is to produce a 
family of inequalities from which other powers of $m$ can be extracted.

Let $\lambda_1,\ldots,\lambda_n$ be the eigenvalues of $\mathcal A$. By
\eqref{eq:noramlization},
$(\lambda_1,\ldots,\lambda_n)$ lies in the simplex
\[
 \mathcal S_\delta
 =
 \left\{(\lambda_1,\ldots,\lambda_n):
 \lambda_i\ge\delta,\ \sum_{i=1}^n\lambda_i=1\right\},
\]
whose vertices are the permutations of
$(a,\delta,\ldots,\delta)$, where $a$ is given by \eqref{eq-a}.
For fixed $t>0$ and $m\ge0$, the function
\[
 (\lambda_1,\ldots,\lambda_n)
 \longmapsto
 \sum_{i=1}^n\log(t+m\lambda_i)
\]
is concave on $\mathcal S_\delta$ and takes the same value at every
vertex. By Jensen's inequality, we obtain
\[
 \det(m\mathcal A+tI)
 \ge
 (t+am)(t+\delta m)^{n-1},
\]
and hence
\begin{equation}\label{scalarshift}
 \big\langle(t+am)^{1/(n-1)}(t+\delta m)\big\rangle
 \le (1+t)^{n/(n-1)}.
\end{equation}

We point out that \eqref{scalarshift} can be viewed 
as a scalar differential inequality. 
More precisely, set
\[
 F(t)=\langle(t+am)^{n/(n-1)}\rangle.
\]
By expressing
$$t + \delta m = \frac{\delta}{a} (t + am) + \frac{a-\delta}{a} t,$$
a straightforward computation, and differentiation under integral sign,
we can rewrite \eqref{scalarshift} as
\begin{equation}\label{eulerinequality}
\Big(\frac{\delta}{a} + \frac{na-1}{na}t \frac{d}{dt}\Big) F\le(1+t)^{n/(n-1)},\quad t>0.
\end{equation}
We now proceed formally. 
Note that $(s,t)\mapsto (s+t)^{n/(n-1)}$ is homogeneous of order $n/(n-1)$.
By a simple change of variable and interchanging the order of integration,
we have, formally,
$$\int_0^\infty F(t) t^{q-1 -n/(n-1)} dt = a^q\langle m^q\rangle \int_0^\infty (1+t)^{n/(n-1)} t^{q - 1 - n/(n-1)} dt. $$
This identity suggests that a weighted integral in the parameter $t$ 
can recover the $q$-th power of $m$. However, the integral on the right-hand
side diverges at infinity, since its integrand behaves like $t^{q-1}$.
We therefore need to subtract the leading terms in the large-$t$ expansion 
before carrying out the weighted integration.

To this end, we write
$$F(t) = t^{n/(n-1)} + \frac{n^2 a}{n-1} t^{1/(n-1)} + O(t^\frac{2-n}{n-1}),$$
where we used \eqref{eq-average-of-m} to compute $\langle m \rangle$.
The crucial observation is
$$\Big(\frac{\delta}{a} + \frac{na-1}{na}t \frac{d}{dt}\Big)
\Big(t^{n/(n-1)} + \frac{n^2 a}{n-1} t^{1/(n-1)}\Big) = t^{n/(n-1)} +\frac{n}{n-1} t^{1/(n-1)},$$
and the right-hand side matches exactly the asymptotics of $(1+t)^{n/(n-1)}$ near infinity
$$(1 + t)^{n/(n-1)} = t^{n/(n-1)} +\frac{n}{n-1} t^{1/(n-1)} + O(t^\frac{2-n}{n-1}).$$
Therefore, we define
\begin{equation}\label{eq: def of K(t)}
\begin{aligned}
G(t)&=F(t)-t^{n/(n-1)}-\frac{n^2 a}{n-1}t^{1/(n-1)}\\
&=\big\langle (t + ma)^{n/(n-1)} - t^{n/(n-1)} - \tfrac{n}{n-1} (ma) t^{1/(n-1)}\big\rangle.
\end{aligned}
\end{equation}
Then, \eqref{eulerinequality} implies, for any $t > 0$,
\begin{align}
\label{eulerinequality-1}
\Big(\frac{\delta}{a} + \frac{na-1}{na}t \frac{d}{dt}\Big) G\le(1+t)^{n/(n-1)} - t^{n/(n-1)} - \frac{n}{n-1} t^{1/(n-1)}.
\end{align}
The map $(s,t)\mapsto (t+s)^{n/(n-1)}-t^{n/(n-1)}-\frac{n}{n-1}st^{1/(n-1)}$
is homogeneous of degree $n/(n-1)$, and is nonnegative for any $s,t \ge 0$,
by the convexity of $t^{n/(n-1)}$.
Moreover, by Tonelli's theorem, the homogeneity,
and a simple change of variable,
\begin{align}
\label{eq-exact-identity-G-integral}
\int_0^\infty G(t) t^{q-1 -n/(n-1)} dt = a^q \langle m^q \rangle J,
\end{align}
where $J$ is given by
\begin{align}
\label{eq-integral-J}
J = \int_0^\infty \Big[(1+ t)^{n/(n-1)} - t^{n/(n-1)} - \frac{n}{n-1} t^{1/(n-1)} \Big] t^{q - 1 - \frac{n}{n-1}} dt.
\end{align}
Note that $0< J < \infty$ for $n/(n-1) < q < 2$.
On the other hand, multiplying $t^{q-1-n/(n-1)}$ and 
integrating \eqref{eulerinequality-1} over $(0,\infty)$,
a straightforward computation and integration by parts yield
\begin{equation}\label{eq: K J inequality}
\Big(1-\frac{q(na-1)}{na}\Big)
 \int_0^\infty G(t)t^{q-1 - n/(n-1)}\,dt\le J.
\end{equation}
We point out that, for $n/(n-1) < q < 2$, 
the boundary term vanishes when computing
$$\int_0^\infty t G'(t) t^{q-1 - n/(n-1)} dt 
= - \Big(q - \frac{n}{n-1} \Big) \int_0^\infty G(t) t^{q-1 - n/(n-1)} dt.$$
Combining \eqref{eq-exact-identity-G-integral} and \eqref{eq: K J inequality},
we obtain \eqref{eq:momentbound 2}.

\end{proof}

\begin{remark}
The restriction $q<2$ comes from the convergence of the 
integral in \eqref{eq-integral-J}.
We can try to subtract high-order terms in Taylor expansion
as in \eqref{eq: def of K(t)} to 
improve the integrability of \eqref{eq-integral-J} for $q>2$. 
However, this would introduce the unknown
quantity $\avg{m^2}$. There is no identity for this term
analogous to $\avg m=n$.
Thus, without additional
information on the adjoint solution, this renormalization procedure
cannot be continued directly.
\end{remark}

Next, we consider the remaining case in Proposition~\ref{sharp moment esti},
namely,
$$n=3\text{ and }2\le q<\frac{3a}{3a-1}.$$

We first prove the following technical lemma.
It can be viewed as a reversed Young's type inequality.

\begin{lemma}\label{lemma:Fp-duality}
Let $1<p\le2$ and $q=p/(p-1)$. Define
\begin{align}
\label{eq-F_p}
 F_p(w,\rho)
 =
 (w+\rho)^{p-1}\big[w-(p-1)\rho\big],
 \quad w,\rho\ge0,
\end{align}
and
\begin{align}
\label{eq-G_q}
 G_q(\tau,\eta)
 =
 (\tau+\eta)^{q-1}
 \big[\tau-(q-1)\eta\big],
 \quad \tau,\eta\ge0.
\end{align}
For every $\tau,\eta$ satisfying
\begin{equation}\label{eq:duality-cone}
 0 \le \eta\le(p-1)\tau,
\end{equation}
set
\begin{align}
\label{eq-w_-star(tau,eta)}
 w_*(\tau,\eta)=(\tau+\eta)^{q-2}
 \big[\tau-(q-2)\eta\big],
\end{align}
with $w_*(0,0) = 0$.
Then, $w_*(\tau,\eta) \ge 0$ and, for any $\rho\ge0$,
\begin{equation}\label{eq:Fp-duality}
 \frac1q G_q(\tau,\eta)
 \le
 \tau w_*(\tau,\eta)-\eta\rho-\frac1pF_p\big(w_*(\tau,\eta),\rho\big).
\end{equation}
\end{lemma}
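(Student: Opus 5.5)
The plan is to treat the right-hand side of \eqref{eq:Fp-duality} as a function of the single variable $\rho$, with $w=w_*(\tau,\eta)$ frozen, and to show that its minimum over $\rho\ge0$ equals $\frac1q G_q(\tau,\eta)$. This is a Legendre-duality statement between the $p$-homogeneous $F_p$ and the $q$-homogeneous $G_q$, and the formula for $w_*$ should be exactly the $w$-coordinate of the dual point.

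First I would check that $w_*\ge0$. Since $p\le2$ we have $q\ge2$, and $(q-2)(p-1)=(q-1)(p-1)-(p-1)=2-p$. The cone condition \eqref{eq:duality-cone} then gives
\[
\tau-(q-2)\eta\ge\tau-(2-p)\tau=(p-1)\tau\ge0 .
\]

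Next I would record the partial derivatives of $F_p$. A direct computation gives
\[
\tfrac1p\partial_wF_p=(w+\rho)^{p-2}\bigl(w+(2-p)\rho\bigr),\qquad
\tfrac1p\partial_\rho F_p=-(p-1)\rho(w+\rho)^{p-2}.
\]
Set $\Phi(\rho)=\tau w_*-\eta\rho-\frac1pF_p(w_*,\rho)$. Then
\[
\Phi'(\rho)=-\eta+(p-1)\rho(w_*+\rho)^{p-2},\qquad
\Phi''(\rho)=(p-1)(w_*+\rho)^{p-3}\bigl(w_*+(p-1)\rho\bigr)\ge0 .
\]
So $\Phi$ is convex on $[0,\infty)$, and it suffices to find a critical point $\rho_*\ge0$ and check that $\Phi(\rho_*)=\frac1qG_q(\tau,\eta)$.

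The candidate is $\rho_*=(q-1)\eta(\tau+\eta)^{q-2}$, for which
\[
w_*+\rho_*=(\tau+\eta)^{q-2}\bigl[\tau-(q-2)\eta+(q-1)\eta\bigr]=(\tau+\eta)^{q-1}.
\]
Using $(q-1)(p-2)=2-q$, we get $(w_*+\rho_*)^{p-2}=(\tau+\eta)^{2-q}$. Hence $(p-1)\rho_*(w_*+\rho_*)^{p-2}=(p-1)(q-1)\eta=\eta$, so $\Phi'(\rho_*)=0$.

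As a consistency check, the same substitution gives $\frac1p\partial_wF_p(w_*,\rho_*)=\tau-(q-2)\eta+(2-p)(q-1)\eta=\tau$, using $(2-p)(q-1)=q-2$. Thus $(w_*,\rho_*)$ is the dual point of $(\tau,-\eta)$ under the gradient map of $\frac1pF_p$.

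For the final evaluation I would use Euler's identity for the $p$-homogeneous $F_p$:
\[
F_p=w\,\partial_wF_p+\rho\,\partial_\rho F_p .
\]
At $(w_*,\rho_*)$ this gives $\frac1pF_p=\frac1p\bigl(\tau w_*-\eta\rho_*\bigr)$. Therefore
\[
\Phi(\rho_*)=\Bigl(1-\tfrac1p\Bigr)(\tau w_*-\eta\rho_*)=\tfrac1q(\tau w_*-\eta\rho_*).
\]
Finally,
\[
\tau w_*-\eta\rho_*=(\tau+\eta)^{q-2}\bigl[\tau^2-(q-2)\tau\eta-(q-1)\eta^2\bigr]=(\tau+\eta)^{q-2}(\tau+\eta)\bigl(\tau-(q-1)\eta\bigr)=G_q(\tau,\eta),
\]
which proves \eqref{eq:Fp-duality}.

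The main care is needed in the degenerate cases rather than in the algebra. When $\tau=\eta=0$ both sides vanish. When $\eta=0<\tau$ we get $\rho_*=0$, and the inequality reduces to convexity of $\Phi$ with minimum at the endpoint. When $w_*+\rho=0$ the negative power $(w_*+\rho)^{p-2}$ appears, and I would handle it by continuity.
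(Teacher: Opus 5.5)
Your proposal is correct and is essentially the paper's argument: you show $\rho\mapsto\tau w_*-\eta\rho-\frac1pF_p(w_*,\rho)$ is convex, find its critical point $\rho_*=(q-1)\eta(\tau+\eta)^{q-2}=\frac{\eta}{p-1}(\tau+\eta)^{q-2}$ (the paper gets this point by solving the gradient system $\tau=\frac1p\partial_wF_p$, $\eta=-\frac1p\partial_\rho F_p$), and use Euler's identity to identify the critical value with $\frac1qG_q(\tau,\eta)$. The slips are cosmetic: Euler's identity should read $pF_p=w\,\partial_wF_p+\rho\,\partial_\rho F_p$ (your next line uses this correct form), for $\tau=\eta=0$ the right-hand side is $\frac{p-1}{p}\rho^p\ge0$ rather than $0$ (so the inequality still holds), and the continuity caveat is unnecessary because the cone condition forces $\tau>0$ whenever $(\tau,\eta)\neq(0,0)$, and then $w_*>0$.
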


\begin{proof}
We first verify that $w_*(\tau,\eta)\ge0$. Since
\[
 q-2=\frac{2-p}{p-1},
\]
the assumption $\eta\le(p-1)\tau$ gives
\[
 \tau-(q-2)\eta
 \ge
 \tau-(2-p)\tau
 =
 (p-1)\tau\ge0.
\]
Hence $w_*(\tau,\eta)\ge0$.

We next prove \eqref{eq:Fp-duality}.
If $\tau=\eta=0$, then $w_*(\tau,\eta)=0$,
$G_q(\tau,\eta)=0$, and $ F_p(0,\rho)=-(p-1)\rho^p$.
Hence, \eqref{eq:Fp-duality} holds trivially.
We now assume $\tau+\eta>0$.
In particular, \eqref{eq:duality-cone} implies $\tau > 0$,
and hence $w_*(\tau,\eta)> 0$.
The choice of $w_*$ arises from a {\it duality} between $F_p$ and $G_q$
with respect to the Lorentzian pairing
\[
 (\tau,\eta)\cdot(w,\rho)=\tau w-\eta\rho.
\]
To see this, a direct computation gives
\begin{align*}
\begin{cases}
\frac1p\partial_wF_p(w,\rho)
 =
 (w+\rho)^{p-2}\big[w+(2-p)\rho\big],\\[0.5em]
  -\frac1p\partial_\rho F_p(w,\rho)
 =
 (p-1)\rho(w+\rho)^{p-2}.
\end{cases}
\end{align*}
Consider the system
\begin{equation}\label{eq:dual-gradient-system}
\begin{cases}
\tau=\frac1p\partial_wF_p(w,\rho) = (w+\rho)^{p-2}\big[w+(2-p)\rho\big],\\[0.5em]
 \eta=-\frac1p\partial_\rho F_p(w,\rho) = (p-1)\rho(w+\rho)^{p-2}.
\end{cases}
\end{equation}
Adding the two equations in \eqref{eq:dual-gradient-system} gives
$\tau+\eta=(w+\rho)^{p-1}$,
and hence
$w+\rho=(\tau+\eta)^{q-1}$.
The second equation in \eqref{eq:dual-gradient-system} then gives
\begin{align}
\label{eq-duality-rho}
 \rho=
 \frac{\eta}{p-1}(\tau+\eta)^{q-2}.
\end{align}
Substituting this into the preceding identity yields
\[
 w=
 (\tau+\eta)^{q-2}
 \big[\tau-(q-2)\eta\big]
 =
 w_*(\tau,\eta).
\]
Thus, for $w_*$ given by \eqref{eq-w_-star(tau,eta)} and $\rho_* = \rho$
given by \eqref{eq-duality-rho},
$(w_*,\rho_*)$ solves \eqref{eq:dual-gradient-system}.
By a simple substitution, it is straightforward to check that
\[
 F_p(w_*,\rho_*) =G_q(\tau,\eta).
\]
Moreover, since $F_p$ is homogeneous of degree $p$, 
\eqref{eq:dual-gradient-system} implies
\[
 F_p(w_*,\rho_*)
 =
 \frac1p\Big[
 w_*\partial_wF_p(w_*,\rho_*)
 +\rho_*\partial_\rho F_p(w_*,\rho_*)
 \Big]
 =
 \tau w_*-\eta\rho_*.
\]
It follows that
\begin{equation}\label{eq:duality-critical-value}
 \tau w_*-\eta\rho_*
 -\frac1pF_p(w_*,\rho_*)
 =
 \frac1qG_q(\tau,\eta).
\end{equation}

Now, consider the function
\[
 \rho\mapsto
 \tau w_*-\eta\rho-\frac1pF_p(w_*,\rho).
\]
A direct computation gives
\[
 \partial_{\rho\rho}F_p(w,\rho)
 =
 -p(p-1)(w+\rho)^{p-3}
 \big[w+(p-1)\rho\big]\le0.
\]
Hence, the function above is convex in $\rho$. By
\eqref{eq:dual-gradient-system}, its derivative vanishes at
$\rho=\rho_*$, so $\rho_*$ is its minimum. Therefore, for every
$\rho\ge0$,
\[
 \tau w_*-\eta\rho-\frac1pF_p(w_*,\rho)
 \ge
 \tau w_*-\eta\rho_*-\frac1pF_p(w_*,\rho_*).
\]
Combining with \eqref{eq:duality-critical-value}, 
we obtain \eqref{eq:Fp-duality}.
\end{proof}

As in the proof of Lemma \ref{sharp esti 1}, 
we need to use an axillary comparison PDE.
Instead of using the Monge--Amp\`ere equation,
we use the $2$-Hessian equation in this case.

For a symmetric $3\times3$ matrix $H$, the $2$-Hessian is defined by
\[
 \sigma_2(H)
 =\frac12\big[(\tr H)^2-|H|^2\big],
\]
where $|H|^2=\tr(H^2)$. Denote the trace-free part of $H$ by
\[
 \mathring H=H-\frac13(\tr H)I.
\]
Since
\[
 |H|^2=\frac13(\tr H)^2+|\mathring H|^2,
\]
we have
\begin{equation}\label{eq:sigma2-tracefree}
 3\sigma_2(H)
 =(\tr H)^2-\frac32|\mathring H|^2.
\end{equation}
The admissible cone for the $2$-Hessian is
\[
 \Gamma_2^+
 =
 \{H:\tr H>0,\ \sigma_2(H)>0\}.
\]
In dimension three, \eqref{eq:sigma2-tracefree} gives the equivalent
description
\begin{align}
\label{eq-Gamma_2-cone-condition}
 \Gamma_2^+
 =
 \Big\{
 H:\tr H>\sqrt{\frac32}\,|\mathring H|
 \Big\}.
\end{align}
The operator $\sigma_2$ is elliptic on $\Gamma_2^+$.

We now prove the $L^q$-estimate in the remaining cases.
\begin{lemma}\label{sharp esti 2}
Let $x_0\in\R^3$ and $r>0$.
Assume $\mathcal A$ in \eqref{eq:operator L} is smooth on
$\overline B_r(x_0)$. Then, for any $2\le q<\frac{3a}{3a-1}$,
we have
\begin{equation}\label{eq:momentbound 3}
 \langle m_{r,x_0}^q\rangle^{1/q}
 \le
 C_q,
\end{equation}
where $C_q$ is given as in \eqref{eq-C_q}.
\end{lemma}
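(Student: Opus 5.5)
The plan is to replace the Monge--Amp\`ere comparison of Lemma~\ref{sharp esti 1} by a $2$-Hessian one, and to extract $m^q$ pointwise from the reverse Young inequality of Lemma~\ref{lemma:Fp-duality}. Normalize $B_r(x_0)=B_1$, write $m=m_{r,x_0}$ and $p=q/(q-1)\in(1,2]$. Decompose $\mathcal A=\frac13 I+\mathring{\mathcal A}$. On the simplex $\mathcal S_\delta$ the quantity $|\mathring{\mathcal A}|$ is a convex function of the eigenvalues, so it is maximized at the vertices $(a,\delta,\delta)$. There $|\mathring{\mathcal A}|=\kappa:=\sqrt{6}\,(a-\tfrac13)/2$. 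Hence, for any symmetric $H$,
\[
 m\mathcal A:H\ \ge\ \tfrac m3\tr H-\kappa m\,|\mathring H| .
\]
With the scaling suggested by \eqref{eq:sigma2-tracefree}, I would set
\[
 w=\tfrac{1}{\sqrt3}\tr H,\qquad \rho=\tfrac{1}{\sqrt2}|\mathring H|,
\]
so that $3\sigma_2(H)=3\,(w^2-\rho^2)=3F_2(w,\rho)$. The right-hand side above then becomes $\tau w-\eta\rho$ with $\tau=c_1m$, $\eta=c_2m$, where $c_1,c_2$ are explicit constants. The ratio $\eta/\tau=c_2/c_1$ is a fixed number depending on $a$ only.

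The first check is that the cone condition \eqref{eq:duality-cone}, $\eta\le(p-1)\tau$, holds. I would also check that $G_q(\tau,\eta)=\tau^q(1+\eta/\tau)^{q-1}\bigl(1-(q-1)\eta/\tau\bigr)$ is a positive multiple of $m^q$. Both should reduce exactly to $q<3a/(3a-1)$, and this is where the threshold enters. Lemma~\ref{lemma:Fp-duality} then gives, pointwise and for every $\rho\ge0$,
\[
 \tfrac1q G_q(\tau,\eta)\ \le\ \tau w_*-\eta\rho-\tfrac1pF_p(w_*,\rho).
\]

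The second step is to choose the comparison function. I would take $\psi$ solving a Dirichlet problem on $B_1$ with $\psi=0$ on $\partial B_1$ and trace prescribed by $\tfrac1{\sqrt3}\Delta\psi=w_*(\tau,\eta)$, which is smooth and positive after the same $t$-regularization as before (replace $m$ by $m+t$). This can be a Poisson equation, or a $\sigma_2$ equation in $\Gamma_2^+$ solved via \cite{CNS1985} if the extra sign information is needed. Put $H=D^2\psi$, $\rho=\tfrac1{\sqrt2}|\mathring H|$ and integrate. Lemma~\ref{lemma-integral-formula-original-adjoint} gives
\[
 \int_{B_1}(\tau w_*-\eta\rho)\ \le\ \int_{B_1} m\mathcal A:D^2\psi=\int_{\partial B_1}\psi_\nu ,
\]
and $\int_{\partial B_1}\psi_\nu=\int_{B_1}\Delta\psi=\sqrt3\int_{B_1} w_*$. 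Combining with the duality inequality yields
\[
 \tfrac1q\int_{B_1}G_q+\tfrac1p\int_{B_1}F_p(w_*,\rho)\ \le\ \sqrt3\int_{B_1}w_* .
\]

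It then remains to bound $\int_{B_1} F_p(w_*,\rho)$ from below by boundary data. The tool is the divergence identity $2\sigma_2(D^2\psi)=\partial_i\bigl(((\tr D^2\psi) I-D^2\psi)_{ij}\partial_j\psi\bigr)$. On $\partial B_1$ the tangential block of $D^2\psi$ is $\psi_\nu I$, so this gives $\int_{B_1}\sigma_2(D^2\psi)=\int_{\partial B_1}\psi_\nu^2$. By Cauchy--Schwarz, $\int_{\partial B_1}\psi_\nu^2\ge |\partial B_1|^{-1}\bigl(\int_{\partial B_1}\psi_\nu\bigr)^2$. For $q=2$ (so $p=2$) we have $F_2=\sigma_2/\!\!\text{const}$, and the argument closes by absorbing the linear term $\int w_*$ against the quadratic one. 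This should reproduce $C_2$ up to the scaling $\langle m\rangle=n$.

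The main obstacle is $p<2$, where $F_p(w,\rho)=(w+\rho)^{p-1}(w-(p-1)\rho)$ is no longer $\sigma_2$. I would first try to prove the pointwise bound $F_p(w,\rho)\ge w^{p-2}(w^2-\rho^2)$ on $\{\rho\le w\}$, that is, on $\Gamma_2^+$. Then I would choose the prescribed quantity so that the weight $w_*^{p-2}$ is absorbed, i.e.\ solve $\sigma_2(D^2\psi)=$ an explicit power of $w_*$, as was done with the power $1/(n-1)$ in \eqref{eq-MA-special-sol}. Failing that, I would reintroduce the parameter $t$ and integrate in $t$ against a power weight as in Lemma~\ref{sharp esti 1}, to interpolate between the $q=2$ identity and the Monge--Amp\`ere bound \eqref{detbound}. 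Finally, tracking constants and letting $t\downarrow0$ should give exactly $C_q$ from \eqref{eq-C_q}.
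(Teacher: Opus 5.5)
\textbf{There is a genuine gap: the range $2<q<3a/(3a-1)$ is not proved.} Your reduction itself is sound and is essentially the paper's Step~2.

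\textbf{What works.} Replacing the pointwise $\eta=\sqrt6\,|\mathring{m\mathcal A}|$ by the worst case $\eta=(3a-1)\tau$ is legitimate. It makes $w_*$ a constant multiple of $m^{q-1}$, which is $C^1$ for $q\ge2$, so the Poisson problem needs no regularization. Your bookkeeping then gives exactly $C_q$. The case $q=2$ is complete and coincides with the paper's.

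\textbf{What is missing.} For $1<p<2$ everything rests on
\[
 \big\langle\mathcal F_p(D^2\psi)\big\rangle\ge\langle\Delta\psi\rangle^p
 \qquad\Big(\text{in your variables, }\langle F_p(w,\rho)\rangle\ge\langle w\rangle^p\Big)
\]
for all $\psi$ vanishing on $\partial B_1$ with $\Delta\psi\ge0$. None of your three suggestions proves it.

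\emph{The pointwise bound.} The inequality $F_p(w,\rho)\ge w^{p-2}(w^2-\rho^2)$ is true, even for all $w>0$, $\rho\ge0$. But it leaves you with $\int(\Delta\psi)^{p-2}\sigma_2(D^2\psi)$. That integral is not a divergence and has no boundary representation. Its integrand tends to $-\infty$ where $\Delta\psi\to0$ with $\mathring{D^2\psi}\ne0$. Worse, the bound is strict whenever $\mathring{D^2\psi}\neq0$. In the radial extremal configuration of the final Remark, your $\psi$ solves $\Delta\psi=K|x|^{-\gamma}$ with $\gamma=(q-1)(3a-1)/a>0$. For that profile $|\mathring{D^2\psi}|/\Delta\psi$ is a positive constant, while the displayed inequality is an equality. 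So after your pointwise step, the inequality you would need is false with the sharp constant, and $C_q$ cannot come out of this route.

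\emph{Prescribing $\sigma_2(D^2\psi)$.} This does not absorb the weight, because the weight is $(\Delta\psi/\sqrt3)^{p-2}$. Once $\Delta\psi\ne\sqrt3\,w_*$, you lose \eqref{eq:Fp-duality}, which holds only at $w=w_*$. Indeed, $F_p$ is convex in $w$, and at $w=\rho=0$ the right-hand side equals $0<\frac1qG_q$. The natural $\sigma_2$-prescribed route, via $\tau w-\eta\rho\ge\sqrt{(\tau^2-\eta^2)(w^2-\rho^2)}$, gives only $\int m^q\lesssim(\int m^{2q-2})^{1/2}$, which closes only for $q\le2$.

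\emph{Interpolation.} Interpolating between the $q=2$ identity and \eqref{detbound}, which controls $m^{3/2}$, reaches only $q\in[3/2,2]$. The $t$-renormalization of Lemma~\ref{sharp esti 1} breaks down at $q=2$, as the Remark after it explains.

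\textbf{The idea you need.} This is the paper's Step~1 for $1<p<2$: the parameter $t$ shifts the Hessian rather than $m$, and a truncated $2$-Hessian is used.
\begin{enumerate}
\item Prove $\int_{B_1}\mathcal Q(D^2v)\ge3\int_{\partial B_1}(v_\nu)_+^2$ for $v$ vanishing on $\partial B_1$, where $\mathcal Q=3\sigma_2$ on $\Gamma_2^+$ and $0$ outside. This follows by comparing $v$ with the $\Gamma_2^+$-admissible solutions of $3\sigma_2(D^2v_j)=h_j$ from \cite{CNS1985}, with $h_j>\mathcal Q(D^2v)$ and $h_j\to\mathcal Q(D^2v)$ uniformly.
\item Apply this and \eqref{newton2} to $\pm[\frac t6(|x|^2-1)-\psi]$ to obtain the family \eqref{shiftcomparison}.
\item Since $\frac t3I-H\in\Gamma_2^+$ exactly when $t>\tr H+\sqrt{3/2}\,|\mathring H|$, integrate against $t^{p-3}$ over $(0,\infty)$. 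The interior side becomes $\frac{2}{p(p-1)(2-p)}\mathcal F_p(D^2\psi)$ and the boundary side becomes $\frac{2\cdot3^{p-1}}{p(p-1)(2-p)}\psi_\nu^p$. This gives \eqref{eq:Fp-boundary}.
\item H\"older on $\partial B_1$ then yields the displayed inequality.
\end{enumerate}
Without this step, your argument proves the lemma only for $q=2$.
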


\begin{proof}
Without loss of generality, we assume $B_r(x_0)=B_1$.
For simplicity, write $m=m_{r,x_0}$ and let $p=\frac{q}{q-1}$.
Since $q\ge2$, we have $1<p\le2$. Moreover,
$q<\frac{3a}{3a-1}$ is equivalent to
\begin{equation}\label{eq:absorption condition}
 (3a-1)(q-1)<1.
\end{equation}

The proof consists of two steps.

\medskip
\textit{Step 1. We derive an estimate for the Hessian.}
We claim that, for any $\psi\in C^2(\overline B_1)$ satisfying
\begin{align}
\label{eq-condition-psi}
 \psi=0\quad\hbox{on }\partial B_1,
 \qquad
 \Delta\psi\ge0,
\end{align}
one has
\begin{equation}\label{eq:Fp-average}
 \big\langle\mathcal F_p(D^2\psi)\big\rangle
 \ge
 \langle\Delta\psi\rangle^p,
\end{equation}
where
\begin{align}
\label{eq-F_p-matrix}
 \mathcal F_p(H)
 =
 F_p\Big(
 \tr H,\sqrt{\frac32}\,|\mathring H|
 \Big),
\end{align}
for a symmetric matrix $H$ with $\tr H\ge0$,
and $F_p$ is given by \eqref{eq-F_p}.

Fix $\psi\in C^2(\overline B_1)$ satisfying
\eqref{eq-condition-psi}.
We start with the following integral identity for the $2$-Hessian,
analogous to \eqref{eq:Z idnetity 2}.
Recall the classical divergence identity for the $2$-Hessian
\cite{Reilly1973},
\[
 \operatorname{div}\big[(\Delta v I-D^2v)Dv\big]
 =
 2\sigma_2(D^2v).
\]
Hence, for any $v\in C^2(\overline B_1)$ with
$v=0$ on $\partial B_1$, the divergence theorem yields
\begin{equation}\label{newton2}
 \int_{B_1}\sigma_2(D^2v)
 =
 \int_{\partial B_1}v_\nu^2.
\end{equation}
We point out that $Dv=v_\nu\nu$ on $\partial B_1$, and the
tangential block of $D^2v$ is $v_\nu I$.

When $p=2$, by \eqref{eq:sigma2-tracefree},
\[
 \mathcal F_2(D^2\psi)
 =
 3\sigma_2(D^2\psi).
\]
Thus, \eqref{newton2} gives
\[
 \int_{B_1}\mathcal F_2(D^2\psi)
 =
 3\int_{\partial B_1}\psi_\nu^2.
\]
Since $\Delta\psi\ge0$ and $\psi=0$ on $\partial B_1$, we have
$\psi_\nu\ge0$. By H\"older's inequality and the divergence theorem,
\[
 3\int_{\partial B_1}\psi_\nu^2
 \ge
 \frac{3}{|\partial B_1|}
 \Big(\int_{\partial B_1}\psi_\nu\Big)^2
 =
 \frac{1}{|B_1|}
 \Big(\int_{B_1}\Delta\psi\Big)^2.
\]
Dividing by $|B_1|$ proves \eqref{eq:Fp-average} for $p=2$.

We now assume $1<p<2$. Define the truncated $2$-Hessian by
\[
 \mathcal Q(H)
 =
 \begin{cases}
  3\sigma_2(H)& \text{if }H\in\Gamma_2^+,\\
  0& \text{if }H\notin\Gamma_2^+.
 \end{cases}
\]
By \eqref{eq:sigma2-tracefree}, $\mathcal Q$ is continuous.

We first claim that, for every $v\in C^2(\overline B_1)$ with
$v=0$ on $\partial B_1$,
\begin{equation}\label{truncated esti}
 \int_{B_1}\mathcal Q(D^2v)
 \ge
 3\int_{\partial B_1}(v_\nu)_+^2.
\end{equation}
The proof is based on a comparison argument adapted from
\cite[Lemma~B, p.~265]{CNS1985}.
Choose smooth strictly positive functions
$h_j>\mathcal Q(D^2v)$ converging uniformly to
$\mathcal Q(D^2v)$ in $\overline B_1$.
By the classical existence theory for the Dirichlet problem for
the Hessian equation
\cite[Theorem~3, p.~264, $k=2$]{CNS1985}, there exists a smooth
function $v_j$ such that
\[
 3\sigma_2(D^2v_j)=h_j,\qquad
 D^2v_j\in\Gamma_2^+,\qquad
 v_j=0\quad\hbox{on }\partial B_1.
\]
If $v_j-v$ had a positive interior maximum, then at that point
$D^2v\ge D^2v_j$.
Since $\Gamma_2^+$ is monotone under addition of nonnegative matrices
and $\sigma_2$ is elliptic on $\Gamma_2^+$, this would imply
\[
 \mathcal Q(D^2v)
 =
 3\sigma_2(D^2v)
 \ge
 3\sigma_2(D^2v_j)
 =
 h_j,
\]
a contradiction. Hence $v_j\le v$ in $B_1$.

Moreover, $\Delta v_j>0$ and $v_j=0$ on $\partial B_1$, so
$v_j\le0$. It follows that
$(v_j)_\nu\ge v_\nu$ and $(v_j)_\nu\ge0$ on $\partial B_1$, and hence
\[
 (v_j)_\nu\ge(v_\nu)_+.
\]
Applying \eqref{newton2} to $v_j$ gives
\[
 \int_{B_1}h_j
 =
 3\int_{\partial B_1}(v_j)_\nu^2
 \ge
 3\int_{\partial B_1}(v_\nu)_+^2.
\]
Letting $j\to\infty$, we obtain \eqref{truncated esti}.

We now introduce a one-parameter family as in the proof of
Lemma~\ref{sharp esti 1}.
For each $t>0$, applying \eqref{newton2} and
\eqref{truncated esti} to
$\pm\Big[\frac{t}{6}(|x|^2-1)-\psi\Big]$
yields
\begin{equation}\label{eq:shifted-truncated}
\begin{aligned}
\int_{B_1}
 \Big[
 3\sigma_2\Big(D^2\psi-\frac{t}{3}I\Big)
 -
 \mathcal Q\Big(\frac{t}{3}I-D^2\psi\Big)
 \Big]  \le
 3\int_{\partial B_1}
 \Big(\psi_\nu-\frac{t}{3}\Big)_+^2,
\end{aligned}
\end{equation}
where we used
$(a_+)^2=a^2-(-a)_+^2$ to combine the two boundary integrals.

Replacing $v$ by $\psi$ in \eqref{newton2} and subtracting the
resulting identity from \eqref{eq:shifted-truncated}, we obtain
\[
\begin{aligned}
&\int_{B_1}
 \Big[
 3\sigma_2\Big(D^2\psi-\frac{t}{3}I\Big)
 -\mathcal Q\Big(\frac{t}{3}I-D^2\psi\Big)
 -3\sigma_2(D^2\psi)
 \Big]  \\
&\qquad\le
 3\int_{\partial B_1}
 \Big[
 \Big(\psi_\nu-\frac{t}{3}\Big)_+^2
 -\psi_\nu^2
 \Big].
\end{aligned}
\]
By the elementary identity
\[
 3\sigma_2(H)
 -
 3\sigma_2\Big(\frac{t}{3}I-H\Big)
 =
 2t\tr H-t^2,
\]
we then obtain
\begin{equation}\label{shiftcomparison}
\begin{aligned}
\int_{B_1}
 \Big[
 2t\Delta\psi-t^2
 +\mathcal Q\Big(\frac{t}{3}I-D^2\psi\Big)
 \Big]  \ge
 3\int_{\partial B_1}
 \Big[
 \psi_\nu^2
 -
 \Big(\psi_\nu-\frac{t}{3}\Big)_+^2
 \Big].
\end{aligned}
\end{equation}

We now extract the exponent $p$ from this one-parameter family.
For a symmetric matrix $H$ with $\tr H\ge0$, the trace of
$tI/3-H$ is $t-\tr H$, while its trace-free part is
$-\mathring H$. Hence, by \eqref{eq-Gamma_2-cone-condition},
\[
 \frac{t}{3}I-H\in\Gamma_2^+
 \quad\Longleftrightarrow\quad
 t>\tr H+\sqrt{\frac32}\,|\mathring H|.
\]
Therefore, it is straightforward to check that
\[
 2t\tr H-t^2+
 \mathcal Q\Big(\frac{t}{3}I-H\Big)
 =
 \begin{cases}
  2t\tr H-t^2,
  &
  0<t<\tr H+\sqrt{\frac32}\,|\mathring H|,
  \\[2mm]
  (\tr H)^2-\frac32|\mathring H|^2,
  &
  t\ge\tr H+\sqrt{\frac32}\,|\mathring H|.
 \end{cases}
\]
A direct calculation gives
\begin{equation}\label{eq:mellin-Fp}
 \int_0^\infty
 \Big[
 2t\tr H-t^2
 +\mathcal Q\Big(\frac{t}{3}I-H\Big)
 \Big]t^{p-3}\,dt
 =
 \frac{2}{p(p-1)(2-p)}\mathcal F_p(H).
\end{equation}
The integral is convergent for $1<p<2$: near $t=0$ the
integrand is $O(t^{p-2})$, while for large $t$ it is
$O(t^{p-3})$.

On the other hand, for every $j\ge0$, a direct calculation gives
\[
 3\int_0^\infty
 \Big[
 j^2-\Big(j-\frac{t}{3}\Big)_+^2
 \Big]t^{p-3}\,dt
 =
 \frac{2\,3^{p-1}}{p(p-1)(2-p)}j^p.
\]
Since $\Delta\psi\ge0$ and $\psi=0$ on $\partial B_1$, we have
$\psi\le0$ and $\psi_\nu\ge0$.
Multiplying \eqref{shiftcomparison} by $t^{p-3}$,
integrating over $(0,\infty)$, and using Fubini's theorem and
\eqref{eq:mellin-Fp}, we obtain
\begin{equation}\label{eq:Fp-boundary}
 \int_{B_1}\mathcal F_p(D^2\psi)
 \ge
 3^{p-1}\int_{\partial B_1}\psi_\nu^p.
\end{equation}

By H\"older's inequality on $\partial B_1$,
\[
 \int_{\partial B_1}\psi_\nu^p
 \ge
 |\partial B_1|^{1-p}
 \Big(\int_{\partial B_1}\psi_\nu\Big)^p.
\]
Using
\[
 \int_{\partial B_1}\psi_\nu
 =
 \int_{B_1}\Delta\psi,
 \qquad
 |\partial B_1|=3|B_1|,
\]
we obtain \eqref{eq:Fp-average} for $1<p<2$.

\textit{Step 2. We apply Lemma~\ref{lemma:Fp-duality} to obtain the
$L^q$-estimate.}

We first relate the Lorentzian pairing in
Lemma~\ref{lemma:Fp-duality} to the matrix pairing.
Decomposing $m\mathcal A$ and $D^2\psi$ into their trace and
trace-free parts, the Cauchy--Schwarz inequality gives
\begin{equation}\label{eq:matrix-dual-pairing}
\begin{aligned}
 3m\mathcal A:D^2\psi
 &=
 m\Delta\psi
 +3\mathring{m\mathcal A}:\mathring{D^2\psi} \\
 &\ge
 m\Delta\psi
 -
 \sqrt6\,|\mathring{m\mathcal A}|
 \sqrt{\frac32}\,|\mathring{D^2\psi}|.
\end{aligned}
\end{equation}
In view of \eqref{eq-F_p-matrix} and \eqref{eq:matrix-dual-pairing}, 
the corresponding variables in Lemma \ref{lemma:Fp-duality} are
\[
 \tau=m,\quad
 \eta=\sqrt6\,|\mathring{m\mathcal A}|,
 \quad
 \rho=\sqrt{\frac32}\,|\mathring{D^2\psi}|,
 \quad w = \Delta \psi.
\]
In order to apply the estimate \eqref{eq:Fp-duality}, 
we need to verify the condition \eqref{eq:duality-cone}, namely, 
\begin{equation}\label{eq:dual-cone-mA}
 0\le
 \sqrt6\,|\mathring{m\mathcal A}|
 \le
 (p-1)m.
\end{equation}
Since $\tr\mathcal A=1$, the maximum of
$|\mathring{\mathcal A}|^2$ is attained at $(a,\delta,\delta)$ and its permutations. Hence,
\[
 |\mathring{\mathcal A}|^2
 \le
 \big(a-\tfrac13\big)^2
 +2\big(\delta-\tfrac13\big)^2
 =
 \frac{(3a-1)^2}{6}.
\]
In particular, we have
\begin{equation}\label{eq:tracefree-A}
 \sqrt6\,|\mathring{\mathcal A}|
 \le
 3a-1.
\end{equation}
Since $m\ge0$ and $\tr(m\mathcal A)=m$, it follows that
$\sqrt6\,|\mathring{m\mathcal A}|\le(3a-1)m.$
On the other hand, \eqref{eq:absorption condition} and
$q-1=1/(p-1)$ imply $3a-1<p-1$.
This proves \eqref{eq:dual-cone-mA}.
Motivated by \eqref{eq-w_-star(tau,eta)}, we choose $\psi$ to solve
\begin{equation}\label{eq:dual-Poisson}
\begin{cases}
 \displaystyle
 \Delta\psi
 =
 w_*\big(m,\sqrt6\,|\mathring{m\mathcal A}|\big)
 &\text{in }B_1,\\[1mm]
 \psi=0
 &\text{on }\partial B_1.
\end{cases}
\end{equation}
More explicitly,
\[
 \Delta\psi
 =
 \Big(
 m+\sqrt6\,|\mathring{m\mathcal A}|
 \Big)^{q-2}
 \Big[
 m-(q-2)\sqrt6\,|\mathring{m\mathcal A}|
 \Big].
\]
By \eqref{eq:dual-cone-mA} and Lemma~\ref{lemma:Fp-duality},
the right-hand side is nonnegative. Hence $\Delta\psi\ge0$, and
the estimate \eqref{eq:Fp-average} from Step 1 applies to $\psi$.

As in Step 1, it is convenient to write
\[
 \mathcal G_q(M)
 =
 G_q\big(
 \tr M,\sqrt6\,|\mathring M|
 \big),
\]
for a symmetric matrix $M$ with $\tr M\ge0$, where $G_q$ is
given by \eqref{eq-G_q}.
Applying Lemma~\ref{lemma:Fp-duality} pointwise with
\[
 \tau=m,\qquad
 \eta=\sqrt6\,|\mathring{m\mathcal A}|,
 \qquad
 \rho=\sqrt{\frac32}\,|\mathring{D^2\psi}|,
\]
and using \eqref{eq:dual-Poisson} and
\eqref{eq:matrix-dual-pairing}, we obtain
\begin{equation}\label{eq:matrix-duality}
 \frac1q\mathcal G_q(m\mathcal A)
 \le
 3m\mathcal A:D^2\psi
 -
 \frac1p\mathcal F_p(D^2\psi).
\end{equation}
Note that
$\big\langle m\mathcal A:D^2\psi\big\rangle = \langle\Delta\psi\rangle$,
by \eqref{eq:integral-fromula}.
Integrating \eqref{eq:matrix-duality}, \eqref{eq:Fp-average} yields
\[
 \frac1q
 \big\langle\mathcal G_q(m\mathcal A)\big\rangle
 \le
 3\langle\Delta\psi\rangle
 -
 \frac1p\langle\Delta\psi\rangle^p.
\]
By Young's inequality,
\[
 3\langle\Delta\psi\rangle
 -
 \frac1p\langle\Delta\psi\rangle^p
 \le
 \frac{3^q}{q}.
\]
Hence,
\begin{equation}\label{eq:Gq-average}
 \big\langle\mathcal G_q(m\mathcal A)\big\rangle
 \le
 3^q.
\end{equation}

Finally, we compare $\mathcal G_q(m\mathcal A)$ with $m^q$.
By the definition of $\mathcal G_q$, we have
\[
 \mathcal G_q(m\mathcal A)
 =
 m^q
 \big(1+\sqrt6\,|\mathring{\mathcal A}|\big)^{q-1}
 \big[1-(q-1)\sqrt6\,|\mathring{\mathcal A}|\big].
\]
By a simple differentiation, it is straightforward to check that the function
$x\mapsto(1+x)^{q-1}[1-(q-1)x]$
is decreasing for $x\ge0$.
Using \eqref{eq:tracefree-A}, we therefore obtain
\[
 \mathcal G_q(m\mathcal A)
 \ge
 (3a)^{q-1}
 [1-(q-1)(3a-1)]m^q.
\]
We point out that the coefficient on the right-hand side is positive
by \eqref{eq:absorption condition}. Combining this estimate with
\eqref{eq:Gq-average}, we obtain
\[
 (3a)^{q-1}
 [1-(q-1)(3a-1)]
 \langle m^q\rangle
 \le
 3^q.
\]
A simple rearrangement then yields \eqref{eq:momentbound 3}.
\end{proof}

\begin{remark}
The restriction $q < na/(na-1)$ appearing in Lemma \ref{sharp esti 1}
and Lemma \ref{sharp esti 2} is sharp, and 
the constant $C_q$ given by \eqref{eq-C_q} is a sharp $L^q$-upper bound
in \eqref{eq:momentbound 2} and \eqref{eq:momentbound 3}.
More precisely, there exists a bounded
measurable coefficient matrix $\mathcal A$ in $B_1$ 
satisfying \eqref{eq:noramlization}
and a weak solution $m$ of \eqref{eq:solutions-adjoint} such that,
for any $q<na/(na-1)$,
\begin{align}
\label{eq-sharp-L^q-m}
\langle m^q\rangle^{1/q}=C_q,
\end{align}
while $m\notin L^q(B_1)$
for any $q\ge na/(na-1)$.

Indeed, consider
\[
 \mathcal A(x)
 =
 a\frac{x\otimes x}{|x|^2}
 +
 \delta\Big(
 I-\frac{x\otimes x}{|x|^2}
 \Big),
 \qquad x\ne0.
\]
The eigenvalues of $\mathcal A$ are $(a,\delta,\ldots,\delta)$
and hence \eqref{eq:noramlization} holds.
Set
\[
 m(x)
 =
 \frac1a
 |x|^{-\frac{na-1}{a}}.
\]
It is straightforward to check that
$m$ is a weak solution of \eqref{eq:solutions-adjoint}.
Moreover, by a simple integration,
\[
\begin{aligned}
 \langle m^q\rangle
 &=
 n a^{-q}
 \int_0^1
 r^{n-1-\frac{q(na-1)}{a}}\,dr.
\end{aligned}
\]
Therefore, $m \in L^q(B_1)$ for $q < na/(na-1)$ 
and \eqref{eq-sharp-L^q-m} holds;
and $m \notin L^q(B_1)$ if $q \ge na/(na-1)$.
\end{remark}

\bibliographystyle{amsplain}
\bibliography{references}

\end{document}